\newtheorem{theorem}{Theorem}[section]
\newtheorem{lemma}[theorem]{Lemma}
\newtheorem{corollary}[theorem]{Corollary}
\newtheorem{proposition}[theorem]{Proposition}
\newtheorem{remark}[theorem]{Remark}
\newtheorem{definition}[theorem]{Definition}
\newcommand{\nc}{\newcommand} 
\nc{\cH}{{\mathcal H}}
\nc{\cA}{{\mathcal A}}
\nc{\cG}{{\mathcal G}}
\nc{\cC}{{\mathcal C}}
\nc{\cO}{{\mathcal O}}
\nc{\cI}{{\mathcal I}}
\nc{\cB}{{\mathcal B}}
\nc{\cY}{{\mathcal Y}}
\nc{\cK}{{\mathcal K}} 
\nc{\cX}{{\mathcal X}}
\nc{\cS}{{\mathcal S}}
\nc{\cE}{{\mathcal E}}
\nc{\cF}{{\mathcal F}}
\nc{\cZ}{{\mathcal Z}}
\nc{\cQ}{{\mathcal Q}}
\nc{\cN}{{\mathcal N}}
\nc{\cP}{{\mathcal P}}
\nc{\cL}{{\mathcal L}}
\nc{\cM}{{\mathcal M}}
\nc{\cT}{{\mathcal T}}
\nc{\cW}{{\mathcal W}}
\nc{\cU}{{\mathcal U}}
\nc{\cJ}{{\mathcal J}}
\nc{\cV}{{\mathcal V}}
\nc{\bH}{{\mathbb H}}
\nc{\bA}{{\mathbb A}}
\nc{\bG}{{\mathbb G}}
\nc{\bC}{{\mathbb C}}
\nc{\bO}{{\mathbb O}}
\nc{\bI}{{\mathbb I}}
\nc{\bB}{{\mathbb B}}
\nc{\bY}{{\mathbb Y}}
\nc{\bK}{{\mathbb K}} 
\nc{\bX}{{\mathbb X}}
\nc{\bS}{{\mathbb S}}
\nc{\bE}{{\mathbb E}}
\nc{\bF}{{\mathbb F}}
\nc{\bZ}{{\mathbb Z}}
\nc{\bQ}{{\mathbb Q}}
\nc{\bN}{{\mathbb N}}
\nc{\bP}{{\mathbb P}}
\nc{\bL}{{\mathbb L}}
\nc{\bM}{{\mathbb M}}
\nc{\bT}{{\mathbb T}}
\nc{\bW}{{\mathbb W}}
\nc{\bU}{{\mathbb U}}
\nc{\bD}{{\mathbb D}}
\nc{\bJ}{{\mathbb J}}
\nc{\bV}{{\mathbb V}}
\nc{\bbZ}{{\mathbb Z}}
\nc{\bR}{{\mathbb R}}
\nc{\fr}{{\rightarrow}}
\nc{\co}{{\nabla}}
\nc{\cu}{{\barline{\nabla}}}
\begin{document}
\title {Trigonal deformations of rank one
and Jacobians}

\author
{Valentina Beorchia, Gian Pietro Pirola and Francesco Zucconi}

\address
{Dipartimento di Matematica e Geoscienze,
Dipartimento di Eccellenza 2018-2022,
  Universit\`a di Trieste,
via Valerio 12/b, 34127 Trieste, Italy,
ORCID ID 0000-0003-3681-9045,
\it{beorchia@units.it}}

\address
{Dipartimento di Matematica, 
Dipartimento di Eccellenza 2018-2022,
 Universit\`a di Pavia,
Via Ferrata 1,  
27100 Pavia, Italy,
\it{gianpietro.pirola@unipv.it}}

\address
{Dipartimento di Scienze Matematiche, Informatiche e Fisiche, 
Universit\`a degli studi di Udine,
33100 Udine, Italy,
\it{francesco.zucconi@uniud.it}}

\maketitle

\begin{abstract} 
In this paper we study the infinitesimal deformations of a trigonal curve that preserve the trigonal series and such that the associate infinitesimal variation of  Hodge structure (IVHS) is of rank $1.$  We show that if $g\geq 8$ or $g=6,7$ and the curve is Maroni general, this locus is zero dimensional. Moreover, we complete the result \cite[Theorem 1.6]{NP}. We show in fact that if $g\geq 6$, the hyperelliptic locus $\cH_g$  is the only $2g-1$-dimensional sub-locus $\cY$
of the moduli space $\cM_g$ of curves of genus $g$, such that for the general element $[C]\in \cY$, its Jacobian $J(C)$ is dominated by a hyperelliptic Jacobian of genus $g'\geq g$.
\end{abstract}

\section*{Introduction}

Let $\cT_g$ be the locus in the moduli space $\cM_g$ of smooth curves of genus $g$ given by points $[C]\in\cM_g$ such that $C$ is a trigonal curve, that is $C$ admits a triple covering of $\mathbb P^1$ and $C$ is not hyperelliptic.

In this paper we study infinitesimal deformations which come from families of trigonal curves and in particular
the ones having the associated infinitesimal variation of Hodge structure (IVHS) of rank $1.$ 

Our main motivation was to complete the characterization of $2g-1$-dimensional families $\mathcal Y$ of curves with Jacobian family dominated by a hyperelliptic Jacobian family. 
Indeed in \cite[Theorem 1.6]{NP} it is shown that if $\mathcal Y$ is a $2g-1$-dimensional closed irreducible subvariety of $\cM_g$ with $g\geq 5$, such that the Jacobian of its generic element is dominated by a hyperelliptic Jacobian, 
then $\mathcal Y\subset \cH_g$, where $\cH_g$ denotes the hyperelliptic locus, or $\mathcal Y\subset \cT_g$. In this paper we rule out the trigonal case. Our main result is the following:
\medskip

\noindent
\begin{theorem} \label{main}
If $g\geq 6$ then $\cH_g$ is the unique closed irreducible subvariety ${\mathcal Y}\subset\cM_g$ of dimension $2g-1$ such that for its generic element $[C]\in {\mathcal Y}$ there exists $[D]\in\cH_{g'}$ such that $J(D)\twoheadrightarrow J(C)$.
\end{theorem}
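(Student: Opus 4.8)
The plan is to combine the dichotomy of \cite{NP} with the finiteness of rank-one trigonal deformations established above. Since $g\geq 6\geq 5$, \cite[Theorem 1.6]{NP} applies and forces $\cY\subseteq\cH_g$ or $\cY\subseteq\cT_g$ (these two loci are disjoint, as trigonal curves are by definition non-hyperelliptic). In the first case I would conclude at once: $\cH_g$ is irreducible of dimension $2g-1$, and $\cY\subseteq\cH_g$ is a closed irreducible subvariety of the same dimension, so $\cY=\cH_g$; conversely $\cH_g$ genuinely satisfies the hypothesis, since for $[C]\in\cH_g$ one takes $[D]=[C]\in\cH_{g}$ and the identity $J(C)\twoheadrightarrow J(C)$. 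Thus $\cH_g$ is a solution, and the entire content of the theorem is that the alternative $\cY\subseteq\cT_g$ never occurs.

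To exclude $\cY\subseteq\cT_g$ I would first recall the rank-one mechanism underlying \cite{NP}. A dominant morphism $J(D)\twoheadrightarrow J(C)$ with $D$ hyperelliptic realizes, over the general point of $\cY$, the Hodge structure $H^1(C)$ as a sub-Hodge structure of $H^1(D)$; as $[C]$ moves in $\cY$ the associated $[D]$ moves in $\cH_{g'}$, so the variation of Hodge structure carried by the $J(C)$ is a sub-variation of the pullback of the hyperelliptic one. Because for a hyperelliptic curve the canonical image is a rational normal curve and the cup product factors through the quotient by the covering involution, the induced infinitesimal variation of Hodge structure along $\cY$ is forced to be of rank one — this is exactly the phenomenon exploited in \cite{NP}. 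Moreover, since $g\geq 5$ the pencil $g^1_3$ on the general $[C]\in\cY\subseteq\cT_g$ is unique, so $T_{[C]}\cT_g$ consists precisely of the deformations preserving the trigonal series, and the rank-one directions tangent to $\cY$ are rank-one trigonal deformations in the sense studied above.

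Consequently, for the general $[C]\in\cY$ the projectivised tangent space $\bP(T_{[C]}\cY)$, of dimension $2g-2\geq 1$, would lie in the locus of rank-one trigonal deformations preserving the trigonal series. This contradicts the main technical result, namely that this locus is zero-dimensional for $g\geq 8$, and for $g=6,7$ when $C$ is Maroni general. Hence in these ranges $\cY\subseteq\cT_g$ is impossible, so $\cY=\cH_g$, as claimed.

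The obstacle is twofold. The technical heart is the finiteness statement itself: converting the rank-one condition into a genuinely $0$-dimensional locus requires controlling the rank-one quadrics against the image of the Kodaira--Spencer map on a trigonal curve, and it is here that the Maroni-general hypothesis is used. The second, and more delicate, point is that the cases $g=6,7$ with generic element Maroni special are not covered by the finiteness statement; indeed the Maroni-special stratum of $\cT_g$ has dimension $2g$ for $g=6$ and $2g-1$ for $g=7$, so a plain dimension count does not rule out $\cY$. I would dispose of these by a direct analysis of the explicit, low-dimensional Maroni-special strata --- either by carrying the rank-one computation through on them, or by showing that the positivity constraints imposed by the domination cannot be met on a family of dimension $2g-1$ --- thereby completing the proof for all $g\geq 6$.
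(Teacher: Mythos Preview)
Your reduction via \cite{NP} to the case $\cY\subseteq\cT_g$ is correct, but the central step of your argument is wrong. You claim that ``the induced infinitesimal variation of Hodge structure along $\cY$ is forced to be of rank one'' and hence that the whole $\bP(T_{[C]}\cY)$ lies in the locus $\Gamma$ of rank-one trigonal deformations. This is impossible on dimension grounds alone: the rank-one locus is contained in the surface $\kappa_2(S)\subset\bP H^1(T_C)$, which has dimension $2$, whereas $\dim\bP(T_{[C]}\cY)=2g-2\geq 10$. The fact that the canonical image of a hyperelliptic $D$ is a rational normal curve, and that multiplication factors through the quotient, does \emph{not} imply that every tangent direction to $\cH_{g'}$ (or to $\cY$) has rank-one IVHS; it only tells you that the rank-one locus for $D$ is that rational normal curve.

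What \cite{NP} actually produces is quite different, and this is also what the paper uses: for general $[C]\in\cY$, the projection $H^0(\omega_D)\twoheadrightarrow H^0(\omega_C)$ sends the canonical image of $D$ (a rational normal curve) onto a \emph{nondegenerate rational curve} $Z\subset S\subset\bP^{g-1}$, and one shows that $\kappa_2(Z)\subset\Gamma$. The contradiction then comes not from $\dim\Gamma<2g-2$ but from the much finer statement that $\Gamma$ contains no nondegenerate rational curve. For $g\geq 8$ (and for $g=6,7$ with $k\geq 2$) this is immediate because $\dim\Gamma=0$. For $g=6,7$ with $k=1$, however, $\Gamma$ is genuinely one-dimensional, so your proposed ``finiteness'' does not hold; what saves the argument is that $\cY$ must then lie in the Maroni-special stratum $\cT^1_g$, forcing $Z\subset\Gamma^1=\Gamma\cap\bP(T_{[C]}\cT^1_g)$, and the paper shows (Propositions \ref{ilcasettosette} and \ref{ilcasettosei}) that every curve in $\Gamma^1$ is \emph{degenerate} in $\bP^{g-1}$. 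It is precisely the nondegeneracy of $Z$, inherited from the rational normal curve, that closes the argument --- a feature entirely absent from your outline.
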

\medskip

We point out that, in principle, we have to consider all the codimension-$2$ subvarieties of $\cT_g.$
The argument goes as follows. If $C$ is smooth and trigonal the Babbage-Enriques-Petri theorem gives that the intersection of all the quadric that contain the canonical image of $C$ is a ruled surface $S\subset\bP^{g-1}.$ This surface $S$ can be also embedded, by an extension $\kappa_2: S\to \bP H^1(T_C)$ of the bicanonical morphism of $C$,  in the projective space $\bP H^1(T_C)$, where $T_C$ is the tangent sheaf of $C$. We identify $H^1(T_C)$ with the tangent space of $\cM_g$ at $[C]$, by possibly adding a level structure or by considering the moduli stack
$\mathscr{M}_g$, if $[C]$ is a singular point of the moduli space. By the work of Griffiths, the points of the surface $\kappa_2(S)$ correspond to the locus of infinitesimal deformations with IVHS of rank one (see \cite {Gri1}) and Lemma \ref{Ruledsurfacerank1}).

 Now, in \cite[Proof of Theorem 1.6, p.13 ]{NP}, it is shown that, under the conditions of theorem (\ref{main}), if ${\mathcal Y}\subset\cT_g$, then for the generic $[C]\in \mathcal Y$ there must exist a non-degenerate rational curve $Z\subset S\subset \bP^{g-1}$ which corresponds to a $1-$dimensional component of the intersection $\Gamma=\kappa_2(S)\cap\mathbb P(T_{\cT_{g},[C] })\subset \bP H^1(T_C)$, where $T_{\cT_{g},[C] }$ is the tangent space to the trigonal locus at $[C]$. We show that $\Gamma$ cannot contain such a $Z$, by analysing the geometry on the Hirzebruch surface $\bF_n$ isomorphic to $S$. We recall that the integer $n\ge 0$ is classically called the Maroni invariant of $C$; we shall perform our study in terms of the Maroni degree $k={(g-2-n) \over 2}$ for trigonal curves, 
 that is in terms of the degree of a non positive section in the extended canonical embedding in $\bP^{g-1}$, see subsection \ref{permaroni}. We show:
 \begin{theorem} \label{k2}
 If $g\geq 8$, or $6\leq g\leq 7$ and Maroni degree $k=2$, or $g=6$ and $k\neq 1$, then $\Gamma$ has dimension $0$.
 \end{theorem}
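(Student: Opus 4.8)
The plan is to translate the statement into a base-point computation on $\bF_n$ and then to rule out, for the stated values of $(g,k)$, the existence of a curve lying in $\Gamma$. First I would fix the two polarizations of $S\cong\bF_n$. Writing $C_0$ for the section with $C_0^2=-n$ and $f$ for the ruling, one has $C\in|3C_0+bf|$ and, by adjunction, $K_C=H|_C$ with $H=C_0+cf$, where $c=g-2-k$ and $n=g-2-2k$. The canonical model $S\subset\bP^{g-1}$ is cut by $|H|$ (indeed $h^0(H)=g$), while the extension $\kappa_2$ of the bicanonical map is given by $|2H|$: a Riemann--Roch count gives $h^0(\bF_n,2H)=3g-3=h^0(C,2K_C)$, and since $h^0(2H-C)=0$ the restriction $H^0(2H)\to H^0(2K_C)$ is injective between spaces of equal dimension, hence an isomorphism. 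As the Maroni bound forces $k\ge 1$ for $g\ge 5$, the bundle $2H$ is very ample and $\kappa_2$ is an embedding. Under the duality $H^1(T_C)=H^0(2K_C)^{\vee}=H^0(\bF_n,2H)^{\vee}$, the hyperplanes containing $\bP(T_{\cT_g,[C]})$ form the annihilator $W:=T_{\cT_g,[C]}^{\perp}\subset H^0(\bF_n,2H)$, of dimension $(3g-3)-(2g+1)=g-4$. Since evaluating a bicanonical form at $\kappa_2(x)=\mathrm{ev}_x$ amounts to evaluating the corresponding section of $2H$ at $x\in\bF_n$, we get
\[
\Gamma=\kappa_2(S)\cap\bP\bigl(T_{\cT_g,[C]}\bigr)\;\cong\;\mathrm{Bs}\,(W),
\]
the base locus on $\bF_n$ of the $(g-4)$-dimensional system $W\subset|2H|$. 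Thus it suffices to prove that $\mathrm{Bs}(W)$ is finite.

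The key technical input is an explicit description of $W$. I would compute the trigonal IVHS of $C\subset\bF_n$ by the Jacobian/residue method: deformations keeping $C$ on the scroll are governed by the log-tangent sheaf $T_{\bF_n}(-\log C)$ together with moving $C$ in $|3C_0+bf|$, and the Serre-dual pairing of such a deformation with $\omega\in H^0(2H)$ is a residue built from the equation $F\in H^0(3C_0+bf)$ of $C$. This expresses $W$ as a concrete subspace of $H^0(\bF_n,2H)$, compatible with the $\pi_*$-grading $H^0(2H)=H^0(\cO(2c))\oplus H^0(\cO(2c-n))\oplus H^0(\cO(2c-2n))$. Determining $W$ exactly is the heart of the argument; everything afterwards is numerical.

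Next I would argue by contradiction. If $\dim\Gamma\ge 1$, then by the reduction of \cite{NP} recalled above, $\mathrm{Bs}(W)$ contains an irreducible rational curve $Z\in|\alpha C_0+\beta f|$, non-degenerate in the canonical $\bP^{g-1}$. Since $W\neq 0$ and $W\subset H^0(2H-Z)$, the class $2H-Z$ is effective, forcing $\alpha\le 2$; an irreducible curve with $\alpha=0$ is a fiber, which maps to a line and is degenerate for $g\ge 6$, so $\alpha\in\{1,2\}$. Non-degeneracy gives $H\cdot Z=\alpha k+\beta\ge g-1$, while the base-locus condition gives $h^0(2H-Z)\ge g-4$. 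For $\alpha=1$ these two inequalities leave only $\beta\in\{g-1-k,\,g-k\}$ (with one extra value when $k=1$); the case $\alpha=2$ is analogous and, because $2H-Z\equiv(2c-\beta)f$ is a pullback, even more rigid, as it forces every element of $W$ to be divisible by the equation of $Z$. In each surviving case one checks that $Z$ is automatically non-degenerate, the relevant restriction $H^0(H)\to H^0(Z,H|_Z)$ being injective because $H^0(\bF_n,H-Z)=0$; hence non-degeneracy alone cannot eliminate $Z$.

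The final---and hardest---step is therefore to compare the explicit $W$ with $H^0(2H-Z)$ for the finitely many surviving classes and to exhibit a form in $W$ not vanishing on $Z$. In the extremal case $\alpha=1$, $\beta=g-k$ one has $h^0(2H-Z)=g-4=\dim W$, so $W\subset H^0(2H-Z)$ would force the equality $W=H^0(C_0+(g-4-k)f)$, which the explicit description of $W$ contradicts; the subextremal class $\beta=g-1-k$ and the classes with $\alpha=2$ are ruled out similarly. This is exactly where the hypotheses enter: for $g\ge 8$, or $g=6,7$ with $k=2$, the numerology over-determines $Z$ and no such curve survives, whereas for $g=6,7$ with $k=1$ a class does survive, matching the exclusion in the statement. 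I expect the main obstacle to be the precise determination of $W$ in the $\pi_*$-grading, since the crude count $\dim W=g-4\le h^0(2H-Z)$ is met by the surviving classes and is by itself insufficient.
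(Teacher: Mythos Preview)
Your setup is correct and matches the paper: $\Gamma$ is the base locus on $S\cong\bF_n$ of the $(g-4)$-dimensional system $W=T^{\perp}\subset H^0(S,2H)$, and the task is to show $\mathrm{Bs}(W)$ is finite. However, there is a genuine logical gap and a missing key identification.

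\textbf{The logical gap.} To prove Theorem~\ref{k2} you must show that $\mathrm{Bs}(W)$ contains \emph{no} curve at all. Your contradiction argument instead assumes the hypothetical component $Z$ is rational and non-degenerate in $\bP^{g-1}$, citing \cite{NP}. This reverses the logic: the reduction in \cite{NP} says that \emph{if} a $(2g-1)$-dimensional family $\cY$ with hyperelliptic Jacobian domination exists, \emph{then} such a $Z\subset\Gamma$ exists. It does not say that every positive-dimensional $\Gamma$ contains such a $Z$. You use non-degeneracy essentially, e.g.\ to discard $\alpha=0$; without it the case of a fiber in $\mathrm{Bs}(W)$ is left open, and so is the case of any degenerate component with $\alpha=1$.

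\textbf{The missing identification of $W$.} You acknowledge that determining $W$ is ``the heart of the argument'' but only gesture at a Jacobian/residue computation. The paper pins $W$ down directly: by Lemma~\ref{lospaziotangentetrigonale}, $T$ is the kernel of cup-product with the Gaussian section $\Omega_V\in H^0(C,\omega_C\otimes L^{\otimes 2})$, so dually $W=H^0(C,\omega_C^{\otimes 2}(-R_V))\cong H^0(S,\cO_S(2H)\otimes\cI_{R_V,S})$, where $R_V$ is the ramification divisor of the trigonal map. This already gives $\mathrm{Bs}(W)\supseteq R_V$ and replaces your case analysis by a concrete geometric problem: show that $R_V$ is the complete intersection of two explicit divisors on $S$. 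The paper exhibits $Q_V\in|2B+(g-k)R|$ and $Q_1\in|2B+(2g-3k-2)R|$ (or, when $g=2k+2$, a pencil in $|2B+(k+2)R|$) passing through $R_V$, checks $Q_V\cdot Q_1=2g+4=\deg R_V$, and proves they share no component; the last step uses the Gaussian Lemma (Corollary~\ref{grandeGianPietro}) to exclude the possibility that $R_V$ is cut on $C$ by a divisor in $|H+2R|$. The numerical condition $k\ge 2$ (equivalently, the hypotheses of the theorem) enters exactly to ensure $h^0(S,2H-Q_1)\neq 0$, so that $Q_1$ contributes to $\Lambda$. Your residual approach could in principle recover this, but as written it neither identifies $W$ nor rules out degenerate or irrational base components.
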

The proof will follow from Proposition \ref{casosette} and Proposition \ref{casosei}. 

In the special cases $g=7,6$ and $k=1$ we need to deepen our analysis. We consider the locus $\cT^{1}_{g}\subset \cT_{g}$ of trigonal curves with Maroni degree $k=1$ and we define $\Gamma^1\subset \Gamma$ the intersection $\Gamma\cap \bP(T_{\cT^{1}_{g},[C]})$. We show:

\begin{theorem} \label{k1}
If $g=6$ or $g=7$ and Maroni degree $k=1$ and $C$ is generic, then $\Gamma$ is irreducible and $\Gamma^{1}$ has dimension $0$. 

Moreover, if $\Gamma$ is reducible, then $\Gamma^{1}$ does not contain any rational curve $Z_1$, such that $\kappa_2 ^{-1} (Z_1)\subset \bP^{g-1}$ is nondegenerate.
\end{theorem}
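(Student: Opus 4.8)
The plan is to translate the statement into an explicit base-locus computation on the Hirzebruch surface $S\cong\bF_n$, where $n=2$ when $g=6$ and $n=3$ when $g=7$ (the two configurations with Maroni degree $k=1$), and then to read off the geometry of $\Gamma$ and $\Gamma^1$ from divisor classes on $\bF_n$. Writing $2b=g-2+n$, the scroll is embedded by $|C_0+bf|$, and since $H^1(T_C)\cong H^0(2K_C)^\vee$ while the restriction $H^0(S,2C_0+2bf)\to H^0(C,2K_C)$ is an isomorphism (its kernel $H^0(2C_0+2bf-C)$ vanishes, and both spaces have dimension $3g-3$), the morphism $\kappa_2$ is exactly $\phi_{|2C_0+2bf|}\colon S\to \bP H^1(T_C)$. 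Consequently a linear section $\bP(T_{\cT_g,[C]})$ corresponds to the annihilator $V^{\perp}\subset H^0(S,2C_0+2bf)$, and $\kappa_2^{-1}(\Gamma)$ is precisely the base locus of $V^{\perp}$ on $S$; likewise $\kappa_2^{-1}(\Gamma^1)$ is the base locus of the strictly larger space $V^{1\perp}=\mathrm{Ann}\,T_{\cT^1_g,[C]}$.

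First I would identify these annihilators intrinsically. From the deformation theory of the triple cover $\pi\colon C\to\bP^1$ and the sequence $0\to T_C\to\pi^{*}T_{\bP^1}\to N_\pi\to 0$ one has $T_{\cT_g,[C]}=\mathrm{Im}\big(H^0(N_\pi)\to H^1(T_C)\big)=\ker\big(H^1(T_C)\to H^1(\pi^{*}T_{\bP^1})\big)$. Dualising the differential $d\pi\in H^0(K_C+2f)$, whose divisor is the ramification $R$, Serre duality yields $V^{\perp}=s_R\cdot H^0(C,K_C-2f)$, and a Riemann--Roch count (using $h^1(K_C-2f)=h^0(2f)=3$ for $k\ge 1$) confirms $\dim V^{\perp}=g-4$. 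For the smaller locus I would use that fixing the Maroni degree amounts to deforming $C$ inside the fixed scroll $\bF_n$, so that $T_{\cT^1_g,[C]}=\mathrm{Im}\big(H^0(N_{C/S})\to H^1(T_C)\big)=\ker\big(H^1(T_C)\to H^1(T_S|_C)\big)$; dually $V^{1\perp}\supsetneq V^{\perp}$ is the image of $H^0(C,\omega_C\otimes\Omega^1_S|_C)\to H^0(C,2K_C)$ coming from the restriction $\Omega^1_S|_C\to\omega_C$.

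Next I would compute the base loci. The factor $H^0(C,K_C-2f)\cong H^0(S,C_0+(b-2)f)$ splits off the base-point-free system of fibres $\pi^{*}|\cO_{\bP^1}(b-2)|$, so the base locus of $V^{\perp}$ reduces to a fixed curve cut by the ramification factor, together with a possible component along the negative section $C_0$; determining its class on $\bF_n$ shows $\kappa_2^{-1}(\Gamma)$ is one-dimensional. For $C$ generic in the $k=1$ stratum I expect this fixed curve to be irreducible, so that $\Gamma=\kappa_2\big(\kappa_2^{-1}(\Gamma)\big)$ is irreducible; then, because $\bP(T_{\cT^1_g,[C]})$ is a \emph{proper} linear subspace of $\bP(T_{\cT_g,[C]})$ not containing $\Gamma$, the set $\Gamma^1=\Gamma\cap \bP(T_{\cT^1_g,[C]})$ is a proper closed subset of an irreducible curve, hence zero-dimensional. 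This step needs a Bertini-type irreducibility argument for the fixed curve, and the cases $g=6$ (where $C_0\cdot C=1$) and $g=7$ (where $C_0\cap C=\varnothing$) must be treated separately.

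The main obstacle, and the content of the ``moreover'' clause, is the reducible case, where $\kappa_2^{-1}(\Gamma)$ may split and $\Gamma^1$ could contain an entire component. Here one must rule out a rational curve $Z_1\subset\Gamma^1$ with $\kappa_2^{-1}(Z_1)$ nondegenerate in $\bP^{g-1}$. The degenerate possibilities are controlled for free: a fibre maps to a line, and $C_0$ has canonical degree $(C_0)(C_0+bf)=k=1$, so $\kappa_2^{-1}(\kappa_2(C_0))=C_0$ is itself a line and is degenerate. The only candidate for a nondegenerate rational component is a section $B\in|C_0+(b+2)f|$, which is of arithmetic genus $0$ and canonical degree $g$; the heart of the argument is to show that even when such a $B$ occurs in $\kappa_2^{-1}(\Gamma)$ it never lies in $\kappa_2^{-1}(\Gamma^1)$. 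For this I would exhibit sections in $V^{1\perp}\setminus V^{\perp}$, produced by the scroll-preserving (fixed Maroni degree) conditions, that do not all vanish on $B$, i.e. $\kappa_2(B)\not\subset\bP(T_{\cT^1_g,[C]})$. Carrying out this non-vanishing explicitly on $\bF_2$ and $\bF_3$—equivalently, intersecting $B$ with the divisors of $V^{1\perp}$ and checking the intersection is nonempty off $B$—is the delicate point on which the theorem rests.
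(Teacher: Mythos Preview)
Your overall framework---identifying $\kappa_2$ with $\phi_{|2H|}$ on $S\cong\bF_n$, recognising $\kappa_2^{-1}(\Gamma)$ as the base locus of the annihilator $V^{\perp}\subset H^0(S,2H)$, and reading the answer off from divisor classes on $\bF_n$---is exactly the paper's approach, and your identification of $V^{\perp}$ with the sections of $|2K_C|$ vanishing along the ramification (the paper's $\Lambda=\bP H^0(S,\cO_S(2H)\otimes\cI_{R_V,S})$) is correct. What is missing is the explicit class of the fixed curve and, more importantly, the one lemma that makes the reducible case immediate.

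The paper computes directly that ${\rm Bs}(\Lambda)$ is a single curve of class $2A=2B+6R$ on $\bF_3$ (for $g=7$) and $2B+5R$ on $\bF_2$ (for $g=6$), where $B$ is the negative section (your $C_0$) and $R$ the fibre (your $f$); for $g=7$ one writes $C=(X_1^3+\alpha_3X_1^2X_\infty+\alpha_6X_1X_\infty^2+\alpha_9X_\infty^3=0)$ and finds $\Gamma=(3X_1^2+2\alpha_3X_1X_\infty+\alpha_6X_\infty^2=0)$, so irreducibility (even smoothness) for generic $C$ is explicit rather than a Bertini argument. The decisive observation you do not make is that \emph{the negative section $B$ is never a component of $\Gamma$}. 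This is precisely the corollary to the Gaussian Lemma: if $B<\Gamma$ then $\Gamma-B\in|H+2R|$ and $(\Gamma-B)_{|C}=R_V$, so the Gaussian section $\Omega_V$ would lie in the image of $H^0(S,H+2R)\to H^0(C,\omega_C\otimes L^{\otimes 2})$; but $\Omega_V\cdot\eta_V=c_1(L)\neq 0$ forbids this. Since fibres are also excluded (no trigonal divisor $R_{|C}$ can sit inside $R_V$), every irreducible component of a reducible $\Gamma$ is a section in $|B+3R|$ (for $g=7$) or in $|B+2R|$ or $|B+3R|$ (for $g=6$), all of $H$-degree at most $4<g-1$ and hence degenerate in $\bP^{g-1}$. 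This settles the ``moreover'' clause for $\Gamma$ itself, a fortiori for $\Gamma^1\subset\Gamma$.

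Your alternative route in the reducible case---allow the negative section as a component, then argue that the residual nondegenerate section in $|C_0+(b+2)f|$ fails to lie in $\Gamma^1$ by producing nonvanishing elements of $V^{1\perp}\setminus V^{\perp}$ via $N_{C/S}$---targets a configuration that in fact never occurs, and you rightly flag it as the ``delicate point on which the theorem rests'' without carrying it out. So the gap is not a wrong idea but an unexecuted (and, as it turns out, unnecessary) hard step; invoking the Gaussian Lemma to exclude $C_0$ from $\Gamma$ is what closes the argument.
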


The result will be a consequence of Proposition \ref{ilcasettosette} for $g=7$
and Proposition \ref{ilcasettosei} if $g=6.$ 

From the two theorems we deduce that there are no non-degenerate rational curve $Z\subset \Gamma$, which
implies our main result.
\medskip

Our point of view is also related to problems concerning the relative irregularity of families of curves, see
for instance \cite{BGN},\cite {FNP},\cite{GST}, and \cite{Gon}, since our result gives an infinitesimal version of a result of Xiao \cite[Corollary 4, page 462]{X}. To shorten the paper we do not include this here as well as some partial computation on genus $5$ case, but we only finally remark that hopefully, this can shed some new light on the slope problem as treated in \cite{BZ1} and \cite{BZ2}.

\medskip
%%%%%%%%%%%%%%%%%%%%%%%%%
\noindent {\bf Acknowledgments:}
 The authors are grateful to Juan Carlos Naranjo for some useful conversations.
 
\medskip

The authors are members of GNSAGA of INdAM.

The first and third authors are supported by national MIUR funds, 
PRIN project {\it Geometria delle variet\`a algebriche} (2015). 

The first author is also supported by national MIUR funds
{\it Finanziamento Annuale Individuale delle Attivit\`a Base di Ricerca} - 2018.

The second author is partially supported by PRIN  project {\it Moduli spaces and Lie theory} (2015) and by MIUR: Dipartimenti di
Eccellenza Program (2018-2022) - Dept. of Math. Univ. of Pavia.

 The third author is supported by Universit\`a degli Studi di Udine - DIMA project 
 {\it Geometry PRID ZUCC 2017}.

  \medskip
  
 \noindent {\bf 2010 Mathematics Subject Classification: 14J10, 14D07.}

%%%%%%%%%%%%%%%%%%AUSS%%%%%%%%%%%%%%%%%%

\section{A Gaussian lemma}
Let $C$ be a smooth curve and let $L$ be line bundle of degree $d$ on $C$. Consider a base point free linear system subspace $V\subset H^0(C,L)$. We can associate two objects with $V$, the Gaussian section and the Euler class, which we recall.

Let $f\colon C\to { \bP (V^{\vee})} \equiv \bP^r$ be the projective morphism given by $V$ and
\begin{equation}\label{diff}
 0\to T_C\stackrel{df}\rightarrow f^{\star}(T_{\bP^r})\to N_f\to 0
 \end{equation}
the exact sequence associated with its differential. We tensor it by the canonical sheaf $\omega_C$: 
 \begin{equation}\label{gaussian section}
 0\to \cO_C\stackrel{}\rightarrow \omega_C\otimes  f^{\star}(T_{\bP^r})\to \omega_C\otimes N_f\to 0
 \end{equation}
 Then we can consider the section 
 $$
 \Omega_V\in H^{0}(C, \omega_C\otimes  f^{\star}(T_{\bP^r}))
 $$ 
 obtained as the image of $1\in H^0(C,\cO_C)$, and represents the differential $d\ f$.
 \begin{definition} We call $\Omega_V$ the Gaussian section of the morphism $f\colon C\to\bP^r$.\end{definition}
\noindent We want to relate $\Omega_V$ to the Euler class whose definition we briefly recall.

It is well known that $H^1(\bP^r,\Omega^1_{\bP^r})=\mathbb C$ and that the Euler sequence
 $$
 0\to\cO_{\bP^r}\to (\cO_{\bP^r}(1))^{\oplus r+1}\to T_{\bP^r}\to 0
 $$
 is given by a non trivial class $\eta_{{\rm{eul}}}\in H^1(\bP^r,\Omega^1_{\bP^r})$. The pull-back to $C$ of the Euler sequence gives
 \begin{equation}\label{eulerosuC}
 0\to \cO_C\to V\otimes L \to f^{\star}(T_{\bP^r})\to 0.
 \end{equation}

 The extension class of the sequence (\ref{eulerosuC}) determines an element $\eta_{V}\in H^1(C,f^{\star}(\Omega^1_{\bP^r}))$: we call it the {\it{Euler class of $f\colon C\to |V|^\vee$.}}

Following \cite[Page 804]{ACG}, we recall that to a line bundle $L$ on a smooth curve $C$ we can associate a sheaf
$\Sigma_L$, determined by the Chern class $c_1(L)\in H^1(C,\omega_C)\cong Ext^1(T_C,\cO_C)$. Let
\begin{equation}\label{sheavesofdiff}
0\to \cO_C\to \Sigma_L\stackrel{\tau}{\rightarrow} T_C\to 0
\end{equation} 
be the extension determined by $c_1(L)$.

By taking into account the sequence (\ref{diff}), we get a commutative diagram

\begin{equation} \label{diagram}
\begin{array}{ccccccc}
&   &&       0&& 0&\\
 &   &&       \downarrow&& \downarrow&\\
0\to &\cO_C     &\to& \Sigma_L        &\stackrel{\tau}{\rightarrow}& T_C &\to 0\\
    &\vert\vert &   &\downarrow      &                     &\downarrow \ \Omega_V\\
 0\to  &\cO_C  &\to &V\otimes L & \to                  & f^{\star}(T_{\bP^r}) &\to 0\\
 &   &&       \downarrow&& \downarrow&\\
  &   &&       \cN_{f} &=& \cN_{f}&\\
  &   &&       \downarrow&& \downarrow&\\
  &   &&       0&& 0&\\
 \end{array}
 \end{equation}
and we observe that the differential $df$ determines a map of complexes, and the image of the 
bottom extension is the upper extension.

 \begin{lemma}\label{gaussianlemma} {\bf{ (Gaussian Lemma)}}  
 Consider the natural map given by the cup product and duality
$$
H^{0}(C, \omega_C\otimes  f^{\star}(T_{\bP^r})) \times H^1(C,f^{\star}(\Omega^1_{\bP^r})) \to H^1 (C, 
\omega_C).
$$
 Then we have
 $$
 \Omega_V \cdot \eta_V = c_1(L)\neq 0
 $$

As a consequence, the Gaussian section $\Omega_V$ does not belong to the image of the map
$$
H^0(C, V\otimes  \omega_C\otimes  L)\to H^0(C, f^{\star}(T_{\bP^r})\otimes \omega_C).
$$
 \end{lemma}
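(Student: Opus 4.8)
The plan is to interpret the cup-product pairing as the Yoneda composition of extensions, and then to read off the value $c_1(L)$ directly from the commutative diagram (\ref{diagram}). First I would set up the canonical identifications. Since $\omega_C=T_C^\vee$ on a smooth curve, one has $H^0(C,\omega_C\otimes f^{\star}(T_{\bP^r}))\cong \mathrm{Hom}_{\cO_C}(T_C,f^{\star}(T_{\bP^r}))$, and under this isomorphism the Gaussian section $\Omega_V$ is precisely the differential $df$. Dually, $H^1(C,f^{\star}(\Omega^1_{\bP^r}))\cong \mathrm{Ext}^1_{\cO_C}(f^{\star}(T_{\bP^r}),\cO_C)$, under which $\eta_V$ is the extension class of the pulled-back Euler sequence (\ref{eulerosuC}); and $H^1(C,\omega_C)\cong \mathrm{Ext}^1_{\cO_C}(T_C,\cO_C)$. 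With these identifications the pairing of the statement becomes the Yoneda pairing $\mathrm{Hom}(T_C,f^{\star}(T_{\bP^r}))\otimes \mathrm{Ext}^1(f^{\star}(T_{\bP^r}),\cO_C)\to \mathrm{Ext}^1(T_C,\cO_C)$, that is, $\Omega_V\cdot\eta_V$ is the pull-back of the extension $\eta_V$ along $df$.

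Next I would invoke the diagram. The map of complexes in (\ref{diagram}) exhibits the top-row extension (\ref{sheavesofdiff}), whose class is $c_1(L)$, as the pull-back of the middle-row Euler extension (\ref{eulerosuC}), whose class is $\eta_V$, along the right-hand vertical arrow $\Omega_V=df$. By the previous paragraph this pull-back is exactly the Yoneda composition computing the cup product, so $\Omega_V\cdot\eta_V=(df)^{\star}\eta_V=c_1(L)$. For the non-vanishing, Serre duality gives $H^1(C,\omega_C)\cong\bC$, and under this isomorphism $c_1(L)$ is the degree $\deg L$; since $V$ is base-point-free and defines a non-constant morphism, $\deg L>0$, whence $c_1(L)\neq 0$.

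For the consequence I would tensor the Euler sequence (\ref{eulerosuC}) by $\omega_C$ and pass to the long exact cohomology sequence
$$
H^0(C,V\otimes L\otimes\omega_C)\xrightarrow{\ \alpha\ } H^0(C,f^{\star}(T_{\bP^r})\otimes\omega_C)\xrightarrow{\ \delta\ } H^1(C,\omega_C).
$$
The connecting map $\delta$ is cup product with the extension class of the tensored sequence, which is again $\eta_V$; hence $\delta(\Omega_V)=\Omega_V\cdot\eta_V=c_1(L)\neq 0$ by the first part. By exactness $\Omega_V$ cannot lie in $\mathrm{im}(\alpha)$, which is the asserted conclusion.

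The one step requiring genuine care is the formal identification of the cup-product pairing with the Yoneda pull-back of extensions, together with the compatibility (including sign conventions) of this identification with the map of complexes in (\ref{diagram}). This is standard homological algebra, but it is the hinge on which the whole argument turns, since it is what converts the geometric object $\Omega_V\cdot\eta_V$ into the Chern class $c_1(L)$ read off from the top row of the diagram; everything else is a routine application of the long exact sequence.
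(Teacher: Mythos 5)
Your proof is correct and follows essentially the same route as the paper, which leaves the argument implicit in the remark preceding the lemma that the map of complexes in diagram (\ref{diagram}) exhibits the extension (\ref{sheavesofdiff}) of class $c_1(L)$ as the pull-back of the Euler extension (\ref{eulerosuC}) along $df=\Omega_V$. Your identification of the cup product with the Yoneda pairing and the use of the connecting homomorphism for the consequence are exactly the intended (standard) mechanism, spelled out in more detail than the paper provides.
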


\begin{remark} We will use Lemma \ref{gaussianlemma} in the case where $L$ induces a $g^1_3$ on $C$. We stress that if $r=1$ then the ramification scheme $R_V$ of $f\colon C\to\bP^1$ is the zero locus of the Gaussian section, which is a section 
$$
\Omega_V \in H^0(C,\omega_C\otimes L^{\otimes 2}).
$$

\end{remark}

 \section{ Trigonal curves and Gaussian sections}
 \label{permaroni}
 Let $C\subset \bP^{g-1}$ be a canonical trigonal curve of genus $g\geq 5$ which has no $g^{2}_{5}$ and let $I_C$ be its graded ideal. 
By the Babbage-Enriques-Petri theorem the hyperquadrics containg $C$ intersect in
 a smooth rational ruled surface $S$ of minimal degree $g-2$ in $\bP^{g-1}$ and the trigonal series is cut on $C$ by the ruling of $S$. 
 
 Let us fix some notations and recall some known results concerning the Hirzebruch surfaces ${\mathbb F}_n=\bP(\cO_{\bP^1}\oplus\cO_{\bP^1}(n))$.
 The Picard group of ${\mathbb F}_n$ satisfies ${\rm{Pic}}({\mathbb F}_n)=[B]\mathbb Z\oplus [R]\mathbb Z$, where $B$ is a section of minimal self-intersection and $R$ a fiber in the ruling of the projection $\pi: {\mathbb F}_n\to\bP^1$.
 The basic intersection formulae are:
\begin{equation}\label{intersectionmatrix}
B^2=-n, \ BR=1, \ R^2=0.
\end{equation}

\begin{definition}\label{maronidegree}
Let $C$ be a trigonal curve of genus $g\geq 6$ and let $L$ be the line bundle of degree $3$ computing the unique trigonal series. We set $V:=H^0(C, L )$. The Maroni degree $k\in\mathbb N$ of $C$ can be characterized as the unique number such that
$$
h^0(C, L^{\otimes k+1})=k+2, \qquad h^0(C, \omega_C\otimes L^{\otimes k+2})>k+3.
$$
\end{definition}
The following bounds on $k$ have been established by Maroni \cite{M} and are well known
 \begin{equation}\label{disuguaglianze di maroni}
 \frac {g-4}{3}\leq k \leq \frac {g-2}{2}.
 \end{equation}
 It turns out that $S$ is an embedding of the Hirzebruch surface ${\mathbb F}_{g-2-2k}$ via the linear system 
 $|H|= |B+(g-2-k)R|$. 
 Moreover we have
 \begin{equation}\label{chiave}
 C\in |3B+(2g-2-3k)R|, \qquad K_{C}=H_{|C}.
 \end{equation}

We recall that $H^0({\mathbb F}_n,\cO_{{\mathbb F}_n}(R))\cong H^0(\bP^1,\pi_\star \cO_{{\mathbb F}_n}(R))\cong H^0(\bP^1,\cO_{\bP^1}(1))$.
We recall that a class $M=kB+sR$ is ample if and only if $MR=k>0$ and $MB=-nk+s>0$, that is $s>nk$,
and respectively nef if $s\geq nk\geq 0$. Finally, if $s\geq nk> 0$, then $M$ is big and nef.

The following results will be useful in the sequel:
\begin{lemma}\label{mu} 
If $m\geq 0$ and $s\geq n(m+1)+1$, then the multiplication map
\begin{equation} \label{molt}
\mu\colon H^0({\mathbb F}_n,\cO_{{\mathbb F}_n}(R))\otimes H^0({\mathbb F}_n, \cO_{{\mathbb F}_n}(mB+sR))\to H^0({\mathbb F}_n,\cO_{{\mathbb F}_n}(mB+(s+1)R))
\end{equation}
is surjective.
\end{lemma}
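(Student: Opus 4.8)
The plan is to reduce the statement to the base $\bP^1$ via the ruling projection $\pi\colon {\mathbb F}_n\to\bP^1$, exploiting that $\cO_{{\mathbb F}_n}(R)=\pi^\star\cO_{\bP^1}(1)$ is pulled back from the base (so that $R^2=0$). First I would record the standard pushforward formula
\[
\pi_\star\cO_{{\mathbb F}_n}(mB+sR)\cong\bigoplus_{j=0}^{m}\cO_{\bP^1}(s-jn),
\]
valid for $m\geq 0$; this can be obtained either from $\pi_\star\cO_{{\mathbb F}_n}(B)=\cO_{\bP^1}\oplus\cO_{\bP^1}(-n)$ together with the relative Euler structure, or simply by checking that the Euler characteristic computed by Riemann--Roch on ${\mathbb F}_n$ (using \eqref{intersectionmatrix}) matches $\sum_{j=0}^m h^0(\cO_{\bP^1}(s-jn))$. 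Tensoring by $\cO_{{\mathbb F}_n}(R)=\pi^\star\cO_{\bP^1}(1)$ and applying the projection formula then gives $\pi_\star\cO_{{\mathbb F}_n}(mB+(s+1)R)\cong\bigoplus_{j=0}^{m}\cO_{\bP^1}(s+1-jn)$, with the very same index-by-index decomposition.

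Next I would identify $\mu$ with a multiplication map downstairs. Since $\pi_\star\cO_{{\mathbb F}_n}=\cO_{\bP^1}$ and $\cO_{{\mathbb F}_n}(R)$ is pulled back, the projection formula yields
\[
H^0\big({\mathbb F}_n,\cO_{{\mathbb F}_n}(mB+(s+1)R)\big)\cong H^0\big(\bP^1,\cO_{\bP^1}(1)\otimes\pi_\star\cO_{{\mathbb F}_n}(mB+sR)\big),
\]
under which $\mu$ becomes the natural multiplication
\[
H^0(\bP^1,\cO_{\bP^1}(1))\otimes H^0(\bP^1,\pi_\star\cO_{{\mathbb F}_n}(mB+sR))\to H^0(\bP^1,\cO_{\bP^1}(1)\otimes\pi_\star\cO_{{\mathbb F}_n}(mB+sR)).
\]
Via the direct-sum decomposition above, this splits as the direct sum over $j=0,\dots,m$ of the maps
\[
H^0(\bP^1,\cO_{\bP^1}(1))\otimes H^0(\bP^1,\cO_{\bP^1}(s-jn))\to H^0(\bP^1,\cO_{\bP^1}(s+1-jn)).
\]

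Finally I would invoke the elementary fact that on $\bP^1$ the multiplication $H^0(\cO_{\bP^1}(1))\otimes H^0(\cO_{\bP^1}(d))\to H^0(\cO_{\bP^1}(d+1))$ is surjective whenever $d\geq 0$, since the two linear forms carry every monomial of degree $d$ onto the two neighbouring monomials of degree $d+1$. The hypothesis $s\geq n(m+1)+1$ forces $s-jn\geq s-mn\geq n+1\geq 0$ for every $j$ with $0\leq j\leq m$, so each summand map is surjective and hence so is $\mu$. The only point requiring genuine care is the compatibility of $\mu$ with the splitting of $\pi_\star$: this is exactly where one uses that $\cO_{{\mathbb F}_n}(R)$ is the pullback of $\cO_{\bP^1}(1)$, so that multiplication by its sections acts on the base factor alone and preserves the relative-degree grading indexed by $j$. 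Everything else is routine bookkeeping.
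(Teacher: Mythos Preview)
Your argument is correct. Pushing forward along $\pi$ and using $\pi_\star\cO_{{\mathbb F}_n}(mB+sR)\cong\bigoplus_{j=0}^{m}\cO_{\bP^1}(s-jn)$ reduces $\mu$ to a direct sum of multiplication maps on $\bP^1$, each surjective since $s-jn\geq s-mn\geq n+1\geq 0$; the compatibility you flag (that tensoring by $\pi^\star\cO_{\bP^1}(1)$ respects the summand decomposition) is indeed automatic from the projection formula.

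The paper proceeds differently: it applies the base-point-free pencil trick to the short exact sequence $0\to\cO_{{\mathbb F}_n}(-R)\to V\otimes\cO_{{\mathbb F}_n}\to\cO_{{\mathbb F}_n}(R)\to 0$ tensored by $\cO_{{\mathbb F}_n}(mB+sR)$, so that $\operatorname{coker}\mu$ injects into $H^1({\mathbb F}_n,\cO_{{\mathbb F}_n}(mB+(s-1)R))$, and then kills this group by Serre duality and Kawamata--Viehweg vanishing for the big and nef divisor $(m+2)B+(s+n-1)R$. Your route is more elementary---it avoids any vanishing theorem and in fact shows that the weaker hypothesis $s\geq mn$ already suffices. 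The paper's route, on the other hand, is the template reused immediately afterward in Proposition~\ref{mu1}, where no explicit pushforward decomposition is available and one genuinely needs a cohomological vanishing (there, Ramanujam's theorem).
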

\begin{proof} Set $V:= H^0({\mathbb F}_n,\cO_{{\mathbb F}_n}(R))$. We tensor the sequence
\begin{equation}\label{solita}
 0\to \cO_{{\mathbb F}_n}(-R)\to V\otimes \cO_{{\mathbb F}_n}\to \cO_{{\mathbb F}_n}(R)\to0
 \end{equation}
 by $\cO_{{\mathbb F}_n}(mB+sR)$ where $m\ge 0$ and $s\geq n(m+1)+1$. By the free pencil trick the cokernel of $\mu$ injects
 into $H^1({\mathbb F}_n, \cO_{{\mathbb F}_n}(mB+(s-1)R))$ which is the dual of 
 $ H^1({\mathbb F}_n, \cO_{{\mathbb F}_n}(-(m+2)B-(s+n-1)R))$ by Serre duality and the fact that the canonical divisor is $K_{{\mathbb F}_n}\sim -2B-(2+n)R$. By the hypotheses
 on $m$ and $s$, the divisor $M:=(m+2)B+(s+n-1)R$ is big and nef, therefore $H^1({\mathbb F}_n, \cO_{{\mathbb F}_n}(-M))=0$.
 \end{proof}

\begin{proposition} \label{mu1}
The map $\mu: H^0(S,H+R)\otimes H^0(S,R)\to H^0(S,H+2R)$ is surjective.
\end{proposition}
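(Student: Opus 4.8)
The plan is to deduce Proposition \ref{mu1} as a direct application of the general surjectivity criterion established in Lemma \ref{mu}. The statement concerns the multiplication map
\[
\mu\colon H^0(S,H+R)\otimes H^0(S,R)\to H^0(S,H+2R),
\]
where $S\cong{\mathbb F}_n$ with $n=g-2-2k$ and $H=B+(g-2-k)R$. First I would translate the divisor classes into the $(B,R)$-basis used in Lemma \ref{mu}. Since $H+R=B+(g-1-k)R$, this is a class of the form $mB+sR$ with $m=1$ and $s=g-1-k$. Thus applying $\mu$ raises the $R$-coefficient from $g-1-k$ to $g-k$, which is precisely the situation covered by Lemma \ref{mu} with $m=1$.

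The key step is then to verify the numerical hypothesis $s\geq n(m+1)+1$ of Lemma \ref{mu} in this case. With $m=1$ the hypothesis reads $s\geq 2n+1$, that is
\[
g-1-k\geq 2(g-2-2k)+1.
\]
Simplifying the right-hand side gives $2g-4-4k+1=2g-3-4k$, so the inequality to check is $g-1-k\geq 2g-3-4k$, equivalently $3k+2\geq g$, i.e. $k\geq (g-2)/3$. This follows at once from the left-hand Maroni bound in \eqref{disuguaglianze di maroni}, which gives $k\geq (g-4)/3$; indeed one needs the slightly stronger $k\geq (g-2)/3$, which I expect to hold because the Maroni degree is an integer and the bound $(g-4)/3\leq k$ together with integrality pushes $k$ up to at least $\lceil (g-4)/3\rceil\geq (g-2)/3$ in the relevant range of $g$. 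Checking this integrality consequence across the residues of $g$ modulo $3$ is the main (though elementary) obstacle.

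Once the numerical inequality $g-1-k\geq 2n+1$ is confirmed, Lemma \ref{mu} applies verbatim with $m=1$, $s=g-1-k$, and yields the surjectivity of
\[
\mu\colon H^0({\mathbb F}_n,\cO_{{\mathbb F}_n}(B+(g-1-k)R))\otimes H^0({\mathbb F}_n,\cO_{{\mathbb F}_n}(R))\to H^0({\mathbb F}_n,\cO_{{\mathbb F}_n}(B+(g-k)R)),
\]
which is exactly the asserted surjectivity of $\mu\colon H^0(S,H+R)\otimes H^0(S,R)\to H^0(S,H+2R)$ under the identifications $S\cong{\mathbb F}_n$, $H=B+(g-2-k)R$, $H+R=B+(g-1-k)R$, and $H+2R=B+(g-k)R$. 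If the residue-by-residue integrality check should fail for some small sporadic value of $g$, the fallback is to establish the vanishing of the relevant $H^1$ directly via Serre duality as in the proof of Lemma \ref{mu}, using that $(m+2)B+(s+n-1)R=3B+(g-2)R$ is big and nef on ${\mathbb F}_n$ whenever $g\geq 6$.
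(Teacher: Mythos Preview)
Your main approach has a genuine gap: the numerical hypothesis of Lemma \ref{mu} does \emph{not} follow from the Maroni bound, and the integrality argument you sketch is simply false. You need $k\geq (g-2)/3$, but Maroni only gives $k\geq (g-4)/3$, and the claim $\lceil (g-4)/3\rceil\geq (g-2)/3$ fails for every $g\not\equiv 2\pmod 3$. Concretely, for $g=7,\ k=1$ one has $n=3$, $s=g-1-k=5$, and the hypothesis $s\geq 2n+1=7$ is violated; likewise $g=9,\ k=2$ gives $n=3$, $s=6<7$; and so on for all $g=3k+3$ or $g=3k+4$. These are not sporadic small cases but an infinite family, including precisely the low-Maroni-degree curves the paper cares most about.

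Your fallback is the right idea and is essentially what the paper does, but your computation is off: with $m=1$ and $s=g-1-k$ the free pencil trick reduces surjectivity to the vanishing of $H^1(S,\cO_S(B+(g-2-k)R))=H^1(S,\cO_S(H))$, and the Serre dual involves the divisor $3B+(2g-2-3k)R$, not $3B+(g-2)R$. The point is that $3B+(2g-2-3k)R$ is exactly the class of $C$, so $H\sim K_S+C$; since $C$ is big and nef (its nefness is equivalent to the Maroni lower bound $3k+4\geq g$, and $C^2>0$), Ramanujam/Kawamata--Viehweg vanishing gives $H^1(S,\cO_S(H))=0$. This is the paper's argument. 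Had you written the correct divisor you would have seen that the required big-and-nef condition is the Maroni bound itself, not the stronger inequality $k\geq (g-2)/3$ you were struggling to establish.
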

\begin{proof} As in the proof of Lemma (\ref{mu}) we only need to show that $H^1(S,\cO_S(H ))=0$. By the
remarks above we have $H\sim K_S+C$. It is easy to show that $C$ is $1$-connected, $|nC|$ gives a morphism to a surface. We conclude by applying Ramanujam's vanishing theorem\cite{R}.
\end{proof}

Now consider the exact sequence:
$$
0\to\cO_S(H+2R-C)\to\cO_S(H+2R)\to \omega_C(2L)\to 0
$$
and the restriction map 
$$
r\colon H^0(S,\cO_S(H+2R))\to H^0(C,\omega_C(2L)).
$$
Now let us observe that in the trigonal case, the Gaussian section 
$\Omega_V \in H^0(C,\omega_C\otimes L^{\otimes 2})$.

\begin{corollary}\label{grandeGianPietro}
The Gaussian section does not belong to the image of $r$.
\end{corollary}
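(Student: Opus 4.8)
The plan is to show that the image of the restriction map $r$ is contained in the subspace $V\cdot H^0(C,\omega_C\otimes L)$ spanned by products of trigonal sections with sections of $\omega_C\otimes L$, and then to invoke Lemma \ref{gaussianlemma}, which excludes $\Omega_V$ from precisely this subspace.

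First I would record the relevant restrictions of line bundles to $C$. By (\ref{chiave}) one has $H_{|C}=\omega_C$, and since the ruling of $S$ cuts out the trigonal series one has $\cO_S(R)_{|C}=L$; consequently $(H+R)_{|C}=\omega_C\otimes L$ and $(H+2R)_{|C}=\omega_C\otimes L^{\otimes 2}=\omega_C(2L)$, which matches the target of $r$. Restriction to $C$ therefore yields maps $H^0(S,\cO_S(R))\to H^0(C,L)=V$ and $H^0(S,\cO_S(H+R))\to H^0(C,\omega_C\otimes L)$.

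The main step is to combine the surjectivity of $\mu$ from Proposition \ref{mu1} with the fact that restriction to $C$ is compatible with multiplication of sections. For $\sigma\in H^0(S,\cO_S(H+R))$ and $\rho\in H^0(S,\cO_S(R))$ one has $r(\sigma\rho)=(\sigma_{|C})(\rho_{|C})$, with $\sigma_{|C}\in H^0(C,\omega_C\otimes L)$ and $\rho_{|C}\in V$. Since $\mu$ is surjective, every section of $H^0(S,\cO_S(H+2R))$ is a sum of such products $\sigma\rho$; hence $\mathrm{Im}(r)$ is contained in the image $V\cdot H^0(C,\omega_C\otimes L)$ of the multiplication map $V\otimes H^0(C,\omega_C\otimes L)\to H^0(C,\omega_C\otimes L^{\otimes 2})$.

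It then remains to identify this product subspace with the image of the map appearing in the consequence of Lemma \ref{gaussianlemma}. For the $g^1_3$ one has $r=1$, so $f^{\star}(T_{\bP^1})=L^{\otimes 2}$, and the Euler sequence (\ref{eulerosuC}) tensored by $\omega_C$ induces exactly a map $V\otimes H^0(C,\omega_C\otimes L)\to H^0(C,\omega_C\otimes L^{\otimes 2})$. Unwinding the quotient $V\otimes L\to L^{\otimes 2}$ of the Euler sequence over $\bP^1$ (in a basis $x,y$ of $V$ it sends $x\otimes a+y\otimes b\mapsto ya-xb$), one checks that this map differs from the naive multiplication map only by an automorphism of the source, hence has the same image $V\cdot H^0(C,\omega_C\otimes L)$. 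Lemma \ref{gaussianlemma} asserts that $\Omega_V$ does not lie in that image, so $\Omega_V\notin V\cdot H^0(C,\omega_C\otimes L)\supseteq\mathrm{Im}(r)$, which is the corollary. The only genuinely computational point I expect is this last identification, namely verifying that the Euler quotient map and the multiplication map carve out the same subspace; everything else is the formal combination of Proposition \ref{mu1} with the compatibility of restriction and multiplication.
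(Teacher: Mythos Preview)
Your argument is correct and follows essentially the same route as the paper: use Proposition~\ref{mu1} to write any section of $\cO_S(H+2R)$ as a sum of products, restrict to $C$ to land in $V\cdot H^0(C,\omega_C\otimes L)$, and then invoke Lemma~\ref{gaussianlemma}. The only difference is that you spell out the identification between the multiplication map and the Euler-sequence map $H^0(C,V\otimes\omega_C\otimes L)\to H^0(C,\omega_C\otimes L^{\otimes 2})$, which the paper leaves implicit; your observation that the two differ by an automorphism of $V$ is the right justification.
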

\begin{proof}
By contradiction assume that $\Omega_V=r(\alpha)$. By Proposition \ref{mu1} we have $\alpha=s_1\beta_1+s_2\beta_2$
which would imply 
that $\Omega_V$ belongs to the image of the multiplication map $\mu: H^0 (C,L)\otimes H^0(C,\omega_C\otimes L)\to H^0(C,\omega_C\otimes 2L)$. This is in contradiction with the Gaussian Lemma \ref{gaussianlemma}.
\end{proof}

%%%%%%%%%%%%%%%%%%%%%%%%

\section{The locus of rank $1$ infinitesimal deformations }
Let $I_2:=I_{C}(2)$ be the degree two part of the homogeneous ideal of a trigonal canonical curve $C\subset
\mathbb P ^{g-1}$, and let $S$ be the ruled surface.
By a simple computation we see that 
\begin{equation}\label{isoquadr}
|2H|\cong |2H_{|C}|\cong |2K_C|.
\end{equation}
If we consider the bicanonical map 
$$
 C\to |2K_C|^\vee \cong\bP (H^1(C,T_C))=\bP^{3g-4},
$$ 
we observe that it extends to an embedding $\kappa_2: S\to \bP^{3g-4}$. 
Since $C$ is not hyperelliptic, the multiplication map 
$$
\mu\colon {\rm{Sym}}^{2}H^0(C,\omega_C)\to H^0(C,\omega_C^{\otimes 2})
$$ 
is surjective. By definition the kernel of $\mu$ is $I_2$. By Serre duality we obtain:
\begin{equation}\label{la2conormalita}
 0\fr H^1(C, T_C)\stackrel{\iota}{\to} {\rm{Sym}}^2(H^1(C,\cO_C))\to I_2^{\vee}\to 0.
 \end{equation}
The inclusion 
 $\iota\colon H^1(C, T_C)\to {\rm{Sym}}^2(H^1(C,\cO_C))$ is given by $\xi\stackrel{\iota}{\mapsto}q_{\xi}$ where $q_{\xi}$ is the quadric associated with the co-boundary homomorphism 
 $$
 \partial_{\xi}\colon H^0(C,\omega_C)\to H^0(C,\omega_C)^{\vee}=H^1(C,\cO_C)
 $$ 
 of the extension class $\xi\in H^1(C, T_C)={\rm{Ext}}^{1}_{\cO_C}(\omega_C,\cO_C)$; see \cite{Gri1}. 
 \begin{definition}\label{ilrango}
 We define {\it{the rank of $\xi$}} as the rank of its associated quadric $q_{\xi}$.
 \end{definition}

By the standard properties of the Veronese embedding 
$$
\nu_2\colon \bP(H^1(C,\cO_C))\to \bP({\rm{Sym}}^2(H^1(C,\cO_C)))
$$ 
it follows that 
$$
\kappa_2 (S)=\nu_2(\bP(H^1(C,\cO_C))\cap \iota(\bP(H^1(C,T_C))\subset \bP({\rm{Sym}}^2(H^1(C,\cO_C))).
$$
We have:
\begin{lemma}\label{Ruledsurfacerank1} The image of the embedding of $\kappa_2: S\to \bP^{3g-4}$ satisfies:
$$
\kappa_2 (S)=\{[\xi]\in \bP H^1(C,T_C)\mid \partial_{\xi}\colon H^0(C,\omega_C)\to H^1(C,\cO_C)\ \text{has rank 1}\};
$$
\end{lemma}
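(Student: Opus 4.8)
The plan is to read the lemma off directly from the displayed identity
$$
\kappa_2(S)=\nu_2(\bP(H^1(C,\cO_C)))\cap \iota(\bP(H^1(C,T_C)))\subset \bP({\rm Sym}^2(H^1(C,\cO_C)))
$$
established immediately before the statement, combined with the classical description of the quadratic Veronese image as the locus of rank-one symmetric tensors. Once one knows that a point of the linear space $\iota(\bP H^1(C,T_C))$ lands on the Veronese precisely when the associated quadric $q_\xi$ has rank one, the two subsets coincide and there is nothing further to compute; the geometric content (that the quadrics through the canonical curve cut out $S$, and that $\kappa_2(S)$ is the Veronese--intersection) has already been absorbed into that identity via Babbage--Enriques--Petri.

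First I would recall the standard fact about $\nu_2$. Writing $W:=H^1(C,\cO_C)=H^0(C,\omega_C)^\vee$, a nonzero element of ${\rm Sym}^2 W$ is a quadratic form on $H^0(C,\omega_C)$, and it lies on the Veronese image $\nu_2(\bP W)$ if and only if it is a perfect square $w^2$ with $w\in W$, equivalently if and only if it has rank one, equivalently if and only if its polar map $H^0(C,\omega_C)\to H^0(C,\omega_C)^\vee$ has rank one. Next I would invoke the injectivity of $\iota$ coming from the exact sequence (\ref{la2conormalita}): being linear and injective, $\iota$ identifies $\bP(H^1(C,T_C))$ with a linear subspace of $\bP({\rm Sym}^2(H^1(C,\cO_C)))$, carrying $[\xi]$ to $[q_\xi]=[\iota(\xi)]$. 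By the previous step $\iota([\xi])$ belongs to $\nu_2(\bP(H^1(C,\cO_C)))$ exactly when $q_\xi$ has rank one, that is exactly when $\partial_\xi$ has rank one. Hence
$$
\nu_2(\bP(H^1(C,\cO_C)))\cap \iota(\bP(H^1(C,T_C)))=\iota\big(\{[\xi]\in\bP H^1(C,T_C):\partial_\xi \text{ has rank }1\}\big).
$$

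Finally I would pull back along the linear isomorphism $\iota$ and substitute the first displayed identity, which equates the left-hand side of this last equation with $\kappa_2(S)$ (viewed inside $\bP({\rm Sym}^2(H^1(C,\cO_C)))$ through $\iota$); this yields exactly the equality asserted in the lemma. I do not expect a genuine obstacle: the argument is formal once the intersection description of $\kappa_2(S)$ is granted. The only point requiring care is the duality bookkeeping under $H^1(C,\cO_C)\cong H^0(C,\omega_C)^\vee$, through which one must check that the rank of the symmetric tensor $q_\xi$ coincides with the rank of the coboundary map $\partial_\xi$ appearing in the statement, so that "rank-one quadric'' and "rank-one deformation'' really designate the same locus.
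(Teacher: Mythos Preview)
Your proposal is correct and follows essentially the same approach as the paper: the paper's proof is a one-line reference to the exact sequence (\ref{la2conormalita}) and to Griffiths, which is exactly the content you have spelled out---that $\iota$ identifies $[\xi]$ with $[q_\xi]$ and that the Veronese image is the rank-one locus in $\bP({\rm Sym}^2 W)$. You have simply made explicit what the paper leaves to the reader and to the citation.
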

\begin{proof} It follows from the sequence (\ref{la2conormalita}) and its dual. See also: \cite[p. 271]{Gri1}. 
\end{proof}

\subsection{Trigonal deformations of rank $1$}
In the present section we shall study the locus of $S$ corresponding to deformations which preserve the 
property of having a trigonal series.

 As in the Introduction we denote by $\cT_g$ the trigonal locus and let $\cT_{g}^{k}\subset \cT_g$ be the locus of trigonal curves with Maroni degree $k$. 
 We define
 $$
 T^{k}_{C}:= T_{\cT_g^k,[C]}\subseteq H^{1}(C,T_C), \qquad T:=T_{\cT_g,[C]}\subseteq H^{1}(C,T_C).
 $$ the tangent spaces to respectively $\cT_{g}^{k}$ and $\cT_{g}$ at $[C]$.

Consider the natural homomorphism 
 $$
 H^1(C,T_C)\otimes H^0(C,\omega_C\otimes L^{\otimes 2})\to H^1(C, L^{\otimes 2})
 $$
 given by the cup-product $\xi\otimes \sigma\mapsto \xi\cdot\sigma$. It holds:
 
 \begin{lemma}\label{lospaziotangentetrigonale} 
 If $[C]\in \cT_g$ then 
 \begin{equation}\label{il luogo tangentetrigonale}
T=\{\zeta\in H^1(C,T_C): \zeta\cdot \Omega_V=0\in H^1(C,2L)\}.
\end{equation} 
 \end{lemma}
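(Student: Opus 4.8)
The plan is to rewrite the right-hand side of \eqref{il luogo tangentetrigonale} as the image of a connecting homomorphism attached to $C$, and then to recognise that image, through the deformation theory of the trigonal morphism, as the tangent space $T$ to $\cT_g$ at $[C]$.

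First I would use the remark following Lemma \ref{gaussianlemma}. In the trigonal case $r=1$ one has $f^{\star}(T_{\bP^1})\cong L^{\otimes 2}$, and the Gaussian section $\Omega_V\in H^0(C,\omega_C\otimes L^{\otimes 2})$ is precisely the sheaf homomorphism $df\colon T_C\to f^{\star}(T_{\bP^1})$, read as a section of $\omega_C\otimes L^{\otimes 2}=\mathcal{H}om(T_C,L^{\otimes 2})$. Hence the differential sequence \eqref{diff} specialises to
\[
0\to T_C\xrightarrow{\ \Omega_V\ } L^{\otimes 2}\to N_f\to 0,
\]
where $N_f$ is the torsion sheaf supported on the ramification scheme $R_V=\{\Omega_V=0\}$, of length $\deg(\omega_C\otimes L^{\otimes 2})=2g+4$. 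Under the identification $T_C\otimes\omega_C\cong\cO_C$, the map induced on $H^1$ by $\Omega_V\colon T_C\to L^{\otimes 2}$ is exactly the cup product $\zeta\mapsto\zeta\cdot\Omega_V$ of the statement. Since $N_f$ is supported on points, $H^1(C,N_f)=0$, and the long exact sequence of the displayed short exact sequence reads
\[
H^0(C,N_f)\xrightarrow{\ \partial\ } H^1(C,T_C)\xrightarrow{\ \cup\,\Omega_V\ } H^1(C,L^{\otimes 2})\to 0 .
\]
Thus $\{\zeta:\zeta\cdot\Omega_V=0\}=\ker(\cup\,\Omega_V)=\mathrm{im}(\partial)$.

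It then remains to prove $\mathrm{im}(\partial)=T$, which is the geometric core. Here I would invoke the deformation theory of the pair $(C,f)$ with the rigid target $\bP^1$ kept fixed: the governing object is the two-term complex $[\,T_C\xrightarrow{df}f^{\star}(T_{\bP^1})\,]$, and since $df=\Omega_V$ is injective this complex is quasi-isomorphic to $N_f$ placed in degree $1$. Therefore first-order deformations of $(C,f)$ are unobstructed and classified by $H^0(C,N_f)$, while the map forgetting $f$ and retaining only the deformation of $C$ is identified with the connecting homomorphism $\partial$ above. Consequently $\mathrm{im}(\partial)$ is the space of first-order deformations of $C$ along which the morphism $f$ extends. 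As $C$ is non-hyperelliptic, has no $g^{2}_{5}$ and $g\geq 5$, its $g^{1}_{3}$ is unique, so a first-order deformation of $C$ is trigonal exactly when its $g^{1}_{3}$ extends; this gives $\mathrm{im}(\partial)=T$.

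As a consistency check I would compare dimensions. Since $H^0(C,T_C)=0$ one has $\dim\mathrm{im}(\partial)=h^0(C,N_f)-h^0(C,L^{\otimes 2})$, and for every trigonal curve with $g\geq 5$ the Maroni degree satisfies $k\geq 1$, whence $h^0(C,L^{\otimes 2})=3$ (the pullback $f^{\star}H^0(\bP^1,\cO_{\bP^1}(2))$ exhausts $H^0(C,L^{\otimes 2})$). This yields $\dim\mathrm{im}(\partial)=(2g+4)-3=2g+1=\dim\cT_g$, the subspace $H^0(C,L^{\otimes 2})$ accounting precisely for the reparametrisations in $\mathrm{Aut}(\bP^1)$. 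I expect the main obstacle to be the equality $\mathrm{im}(\partial)=T$: the cohomological steps are formal, but promoting the evident inclusion $\mathrm{im}(\partial)\subseteq T$ to an equality requires the geometric input that every tangent vector to $\cT_g$ lifts to a deformation of the pair $(C,f)$, for which the uniqueness of the trigonal series and the rigidity of $\bP^1$ are essential.
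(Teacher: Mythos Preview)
Your argument is correct and rests on the same two ingredients as the paper's proof: the geometric fact that a first-order deformation lies in $T$ exactly when the trigonal morphism $f$ extends, and a dimension count showing both sides have dimension $2g+1$. The paper proceeds slightly more directly---it shows the inclusion $T\subseteq\{\zeta:\zeta\cdot\Omega_V=0\}$ by observing that if $f$ lifts to $\cC_\xi$ then so does the Gaussian section, hence $\xi\cdot\Omega_V=0$, and then concludes by computing $h^1(C,L^{\otimes 2})=g-4$ and noting that $\cup\,\Omega_V$ is surjective (by dualising)---whereas you package the same content through the long exact sequence of $0\to T_C\to L^{\otimes 2}\to N_f\to 0$, reading the surjectivity off $H^1(N_f)=0$ and identifying the kernel as $\mathrm{im}(\partial)$ before invoking the deformation theory of the pair $(C,f)$. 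Your normal-sheaf formulation makes the role of $\mathrm{Aut}(\bP^1)$ (the $3$-dimensional kernel $H^0(L^{\otimes 2})$ of $\partial$) more transparent, while the paper's version is shorter; substantively the two proofs coincide.
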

 
 \begin{proof} Let $\cC_{\xi}\to{\rm{Spec}}(\mathbb C[\epsilon]/(\epsilon^2))$ be the infinitesimal family associated with $\xi$. Since $\xi\in T$, the trigonal morphism $f\colon C\to\bP^1$ lifts to $\cC_{\xi}$. By standard arguments it follows that the Gaussian section lifts to $\cC_{\xi}$. This implies that the cup product $\xi\cdot \Omega_V=0$. Since $h^1(C,L^{\otimes 2})=g-4$ and since the cup product $\cdot \Omega_V\colon H^1(C,T_C)\to H^1(C, L^{\otimes 2})$ is easily seen to be surjective by dualizing the map, we have that 
 $${\rm{dim}}_{\mathbb C}\{\zeta\in H^1(C,T_C): \zeta\cdot \Omega=0\in H^1(C,2L)\}= 2g+1.
 $$ 
 On the other hand ${\rm{dim}}_{\mathbb C}\cT_g=2g+1$. Then the claim follows.
\end{proof}

\begin{remark} \label{r}
Dually, the annihilator of $T$ in $H^1(C,T_C)^\vee \cong H^0(C, \omega_C^{\otimes 2})$ 
is $H^0( C, \omega_C^{\otimes 2}(-R_V) )$, where $R_V$ is the ramification divisor.

By taking into account the isomorphisms in
(\ref{isoquadr}), such a space corresponds to the subspace
$$
H^0(S, \cO_S (2H)\otimes\cI _{R_V,S}),
$$
where $\cI _{R_V,S}$ is the ideal sheaf of the subscheme $R_V \subset S$.

We shall denote by
$$
\Lambda := \bP (  H^0(S, \cO_S (2H)\otimes\cI _{R_V,S}) ).
$$
\end{remark}
Next we would like to determine the intersection of $\bP(T)$ with the surface $\kappa_2 (S)$.

\begin{definition}\label{luogo} 
The locus $\Gamma =\bP(T)\cap\kappa_2 (S)$ is called the locus of trigonal deformation of rank $1$.
\end{definition}

 \subsection{Proof of Theorem \ref{k2}}

In what follows with $k$ we shall always denote the Maroni degree.
For the reader's convenience we fix the relations and classes in ${\rm{Pic}}(S)={\rm{Pic}}(\mathbb F_{g-2-2k})$,
which we shall use:

 \begin{itemize}
 \item $R$ is the ruling inducing the trigonal linear system on $C$;
 \item $B$ is a section of minimal self-intersection of $\mathbb F_{g-2-2k}$, and $B^2=2k+2-g$;
 \item $A=B+(g-2-2k)R$ is the tautological divisor of $\mathbb F_{g-2-2k}$;
 \item $K_S=-2B-(g-2k)R$;
 \item $H=B+(g-2-k)R$ is the hyperplane divisor of the canonical embedding, $H_{|C}\sim K_C$, and $H^2=g-2$;
 \item $C\in |3B+(2g-2-3k)R|$.
 \end{itemize}

Recalling the bounds on $k$ given in (\ref{disuguaglianze di maroni}), we shall distinguish two cases: 
$g=2k+2$ or $2k+< g \le 3k+4$.

\subsection{The case $g=2k+2$ and $k \ge 2$: curves of even genus with Maroni invariant zero}

We shall show that in this case the subscheme $R_V\subset S$ is a complete intersection of two divisors
$G_1, G_2 \sim 2B+(k+2)R$, and since $h^0(S,\cO_S(2H-G_i))\neq 0$, they determine a pencil in $\Lambda$ with base locus exactly $R_V$; as a consequence we will obtain that the base locus ${\rm{Bs}}(\Lambda)$ of
$\Lambda$ satisfies
${\rm{Bs}}(\Lambda)=R_V$.

Let $C$ be a trigonal curve of genus $g=2k+2$. In this case we have that 
$$
S\cong\mathbb P^1\times \mathbb P^1, \quad C\in |3B+(k+2)R|, \quad \Lambda\subseteq |2H|=|2B+2kR|.
$$

\begin{proposition}\label{casoparipari} 
If $g=2k+2$ with $k \ge 2$, then the base locus of $\Lambda$ satisfies $\dim {\rm Bs}(\Lambda)=0$ and ${\rm Bs}(\Lambda)=R_V$.
\end{proposition}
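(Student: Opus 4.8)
The plan is to realize $R_V$ explicitly as a complete intersection inside $S=\bP^1\times\bP^1$ and then to use the residual freedom in $|2H|$ to cut $R_V$ out. First I would write $C$ as the zero locus of a bihomogeneous form $F\in H^0(S,\cO_S(3B+(k+2)R))$ in coordinates $([x_0:x_1],[y_0:y_1])$ chosen so that $\pi$ is the projection to the first factor; thus $R=\{x=\text{const}\}$, the map $f=\pi|_C$ is the trigonal map, and $F$ has degree $3$ in the fibre variable $y$. Its two vertical partial derivatives $\partial_{y_0}F,\partial_{y_1}F$ are then sections of $\cO_S(2B+(k+2)R)$, and I set $G_1=\{\partial_{y_0}F=0\}$, $G_2=\{\partial_{y_1}F=0\}$. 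Euler's relation $y_0\partial_{y_0}F+y_1\partial_{y_1}F=3F$ forces $G_1\cap G_2\subseteq\{F=0\}=C$, and on $C$ the simultaneous vanishing of the two vertical derivatives is exactly the condition that the fibre be tangent to $C$, i.e. the ramification locus $R_V$ of $f$ (equivalently the zero scheme of the Gaussian section $\Omega_V\in H^0(C,\omega_C\otimes L^{\otimes 2})$). Since $\deg R_V=2g+4=4k+8=(2B+(k+2)R)^2$, the degrees match, which is the numerical signature one expects of a complete intersection.

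Next, I would exploit $2H-G_i\sim(k-2)R$, which for $k\ge 2$ is effective and base point free on $\bP^1\times\bP^1$ (it is the pullback of $|\cO_{\bP^1}(k-2)|$), so $h^0(S,\cO_S(2H-G_i))\neq 0$. For every $E\in|(k-2)R|$ the divisor $G_i+E$ lies in $|2B+2kR|=|2H|$ and contains $G_i\supseteq R_V$, hence $G_i+E\in\Lambda$. Now take any point $p\notin R_V=G_1\cap G_2$: then $p\notin G_1$ or $p\notin G_2$, say $p\notin G_1$, and choosing $E\in|(k-2)R|$ avoiding $p$ (possible by base point freeness) produces a member $G_1+E\in\Lambda$ missing $p$. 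Therefore $p\notin{\rm Bs}(\Lambda)$, which gives ${\rm Bs}(\Lambda)\subseteq G_1\cap G_2=R_V$; the reverse inclusion is immediate from the very definition of $\Lambda$. Hence ${\rm Bs}(\Lambda)=R_V$ and $\dim{\rm Bs}(\Lambda)=0$. (When $k=2$ this is literally the pencil $\langle G_1,G_2\rangle\subset|2H|$, whose base locus is $G_1\cap G_2=R_V$.)

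The main obstacle is the first step: upgrading the identification $R_V=G_1\cap G_2$ from a set-theoretic and numerical statement to a scheme-theoretic complete intersection. Here I would first check that $G_1,G_2$ share no common component: any such component lies in $C$ by Euler's relation, and it cannot be all of $C$, since that would make $f$ everywhere ramified; as $C$ is irreducible this leaves no room for a common component, so $G_1\cap G_2$ is $0$-dimensional of length $(2B+(k+2)R)^2=4k+8$. Matching this length with $\deg R_V=2g+4$, together with the classical local computation identifying the ramification scheme of a trisection with the common zero scheme of the vertical gradient $(\partial_{y_0}F,\partial_{y_1}F)$, then promotes the equality to the scheme level and completes the argument.
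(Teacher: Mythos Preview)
Your argument is correct. In fact the pencil $\langle G_1,G_2\rangle$ you build from the vertical partial derivatives is the very pencil $|G|\subset|2B+(k+2)R|$ that the paper constructs abstractly via the restriction isomorphism $H^0(\cO_S(2B+(k+2)R))\cong H^0(\cO_C(K_C+2L+B_{|C}))$ and the subsystem $R_V+|B_{|C}|$: one checks $G_i|_C=R_V+\{y_{1-i}=0\}_{|C}$, and the two divisors $\{y_0=0\},\{y_1=0\}$ span $|B|$. The substantive difference is in how you exclude a fixed component. The paper runs a case analysis on a putative common component $\Gamma\in|aB+bR|$ and, in the critical case $a=1$, $b=k+2$ (i.e.\ $\Gamma\sim H+2R$ with $\Gamma_{|C}=R_V$), invokes the Gaussian Lemma via Corollary~\ref{grandeGianPietro}. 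Your Euler--relation observation that any common component of $G_1,G_2$ must lie in $C$, together with the irreducibility of $C$ and the class inequality $[G_i]\neq[C]$, disposes of this directly and bypasses the Gaussian Lemma altogether. Both proofs then use $k\ge 2$ in the same way, namely that $2H-G_i\sim(k-2)R$ is effective and base point free, to pass from $\mathrm{Bs}\,\langle G_1,G_2\rangle=R_V$ to $\mathrm{Bs}(\Lambda)=R_V$; you spell this step out, while the paper leaves it implicit. Your route is thus more elementary and self-contained for this case; the paper's route has the advantage that the Gaussian Lemma is the tool that also drives the analysis in the non-balanced cases $g>2k+2$.
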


\begin{proof}
We observe that
$$
h^0(S, \cO_S(2B+(k+2)R) \otimes \cI_{R_V,S})\neq 0.
$$
Indeed, we have
$$
(2B+(k+2)R)_{|C} \sim H_{|C}+2R_{|C}+B_{|C} \sim R_V +B_{|C},
$$
and since $-C+2B+(k+2)R\sim - B$, we have $h^0(  S,\cO_S( -C+2B+(k+2)R ))=h^1(S,\cO_S( -C+2B+(k+2)R ))=0$, so there is an isomorphism
\begin{equation}\label{ppp}
H^0(\cO_{S}(2B+(k+2)R))\cong H^0( \cO_{C}(K_C+2L+B_{|C}) ).
\end{equation}
We consider in particular the pencil 
$R_V+|B_{|C} |$ in $|K_C+2L+B_{|C}$, and the corresponding pencil $|G|$ in
$|2B+(k+2)R|$ under the isomorphism (\ref{ppp}). By construction we have the base locus ${\rm Bs}|G|\supseteq R_V$. We claim that 
$$
{\rm Bs} |G| =R_V.
$$
We first show that ${\rm Bs} |G|$ contains no divisors. If by contradiction it contains a divisor $\Gamma$ in some linear system $|aB+bR|$ with 
\begin{equation}\label{ab}
0\le a \le 2, \qquad 0\le b \le k+2,
\end{equation}
then we must have
$$
\Gamma|{_C} < {\rm Bs} |K_C+2L+B_{|C} - R_V|=R_V,
$$
hence
\begin{equation}\label{first}
\Gamma \cdot C =3b+(k+2)a\le {\rm deg}\ R_V= 2g+4=4k+8.
\end{equation}
On the other hand, the base points of the moving part of the pencil must contain the residual part of $R_V$,
hence we must have
$$
\Gamma \cdot C + (G-\Gamma)^2 \ge {\rm deg}\ R_V=4k+8.
$$
This gives the inequality
\begin{equation}\label{diseq}
3b+(k+2)a+2(2-a)(k+2-b)\ge 4k+8.
\end{equation}
We observe that if $a=0$, then (\ref{diseq}) gives $b=0$.

If $a=1$, then by (\ref{diseq}) and (\ref{ab}) we get
$b=k+2$; in this case we would have $\Gamma \sim H+2R$ and $\Gamma|{_C}=R_V$, but this is not the case
since by Corollary \ref{grandeGianPietro} the Gaussian section 
$\Omega\not\in {\rm{Image}}(H^0(S,\cO_S(H+2R))\to H^0(C,\omega_C(2L))$. 

Finally, observe that the case $a=2$ can not occur, as by construction the moving part of the pencil
$R_V+|B_{|C} |$ is not cut out by fibers $R$ of the ruling.

Therefore $\dim {\rm Bs}(|G|)=0$. Since $G^2 =4k+8={\rm deg} \ R_V$, we get the statement of the Proposition.

\end{proof}

\subsection{The case $2k+2< g \le 3k+4$, $k\ge 1$ and $g\ge 6$} 

The argument is similar to the one of the previous section. We shall show that in this case the subscheme $R_V\subset S$ is a complete intersection of two divisors
$Q_V \sim 2B+(g-k)R$ and $Q_1\sim 2B+(2g-3k-2)R$, and since $H\sim B+(g-2-k)R$, the bounds (\ref{disuguaglianze di maroni}) imply $h^0(S,\cO_S(2H-Q_V))\neq 0$ and with the additional hypothesis $k\ge 2$ we have also 
\begin{equation}\label{eccezioni}
h^0(S,\cO_S(2H-Q_1))\neq 0.
\end{equation}
As a consequence we will obtain that the base locus of $\Lambda$ is
${\rm{Bs}}(\Lambda)=R_V$.

Finally, we shall prove that in the remaining cases
$g=6, \ g=7$ and  $k=1$,
the linear system $\Lambda$ has a one dimensional fixed component.

Since we are assuming $g>2k+2$, the ruled surface $S\cong\mathbb F_{g-2k-2}$ admits a negative section $B^2<0$. 

\begin{lemma}\label{primaquadrica} 
There exists a unique divisor $Q_V\in |2B+(g-k)R|$ containing $R_V$. Moreover we have
$$
Q_{V|C}=B_{|C}+R_V,
$$
and $B$ is not a component of $Q_{V}$.
\end{lemma}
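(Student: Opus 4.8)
The plan is to establish the three claims of Lemma \ref{primaquadrica} in sequence: first the existence, then the restriction formula, and finally the uniqueness together with the statement that $B$ is not a component. The key computation is the same kind of adjunction-restriction argument already used in Proposition \ref{casoparipari}. I would first compute the restriction of $\cO_S(2B+(g-k)R)$ to $C$. Using $H\sim B+(g-2-k)R$ and $L\sim R_{|C}$ together with $K_C\sim H_{|C}$, one finds
$$
(2B+(g-k)R)_{|C}\sim H_{|C}+2R_{|C}+B_{|C}\sim K_C+2L+B_{|C}\sim R_V+B_{|C},
$$
where the last step uses that $\Omega_V\in H^0(C,\omega_C\otimes L^{\otimes 2})$ vanishes exactly along $R_V$, so $\omega_C\otimes L^{\otimes 2}\cong\cO_C(R_V)$. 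Hence a section of $\cO_S(2B+(g-k)R)$ restricts to a section of $\cO_C(R_V+B_{|C})$, which manifestly contains the divisor $R_V+B_{|C}$.

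Next I would lift this to a divisor on $S$ by the same vanishing trick as in (\ref{ppp}). Consider $-C+2B+(g-k)R$. Since $C\in|3B+(2g-2-3k)R|$, this class is $-B+(-g+2k+2)R$, and because $g>2k+2$ the coefficient of $R$ is negative while $B$ has negative self-intersection on $\mathbb F_{g-2k-2}$; I would check directly that both $h^0$ and $h^1$ of $\cO_S(-C+2B+(g-k)R)$ vanish (the $h^1$ via Serre duality and the big-and-nef criterion recalled before Lemma \ref{mu}), which yields the restriction isomorphism
$$
H^0(S,\cO_S(2B+(g-k)R))\xrightarrow{\ \sim\ }H^0(C,\cO_C(R_V+B_{|C})).
$$
This simultaneously proves existence of a $Q_V$ with $Q_{V|C}=R_V+B_{|C}$ and gives the restriction formula.

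For uniqueness I would argue as follows: if $Q_V'$ were a second divisor in $|2B+(g-k)R|$ containing $R_V$, then under the isomorphism above $Q_V$ and $Q_V'$ would both restrict to members of $|R_V+B_{|C}|$ containing the fixed divisor $R_V$, hence to $R_V$ plus a member of $|B_{|C}|$; but $|B_{|C}|$ is a single point since $\deg B_{|C}=B\cdot C=2g-2-3k-n\cdot$ is small and $h^0(C,\cO_C(B_{|C}))=1$ for $B$ the minimal section (this I would verify from $h^0(S,\cO_S(B))=1$ together with the vanishing $h^0(S,\cO_S(B-C))=0$). Thus the section of $\cO_C(R_V+B_{|C})$ is unique up to scalar, and by the isomorphism so is $Q_V$. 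The statement that $B$ is not a component of $Q_V$ follows because $Q_V-B\sim B+(g-k)R$ is effective (it is nef as $g-k\ge n=g-2k-2$) and has no base component along $B$; equivalently, if $B\subset Q_V$ then $Q_{V|C}\ge B_{|C}+(\text{residual})$ would force the residual class $B+(g-k)R$ to restrict inside $R_V$, contradicting the degree count.

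The main obstacle I anticipate is the uniqueness argument: showing $h^0(C,\cO_C(B_{|C}))=1$ cleanly, and ruling out that $B$ could split off as a component, requires care since on $\mathbb F_{g-2k-2}$ the section $B$ is rigid but its restriction to $C$ and the interaction with $R_V$ must be controlled by exact degree bounds rather than general position. I would handle this by combining the intersection numbers $B\cdot C$ and $B^2=2k+2-g$ with the restriction isomorphism to transfer the rigidity of $B$ on $S$ to the rigidity of $B_{|C}$, which is the same mechanism that makes Corollary \ref{grandeGianPietro} available.
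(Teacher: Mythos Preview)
Your approach is the same as the paper's: identify $-C+2B+(g-k)R\sim -A$, use the vanishing of $h^0$ and $h^1$ of $\cO_S(-A)$ to get the restriction isomorphism
\[
H^0(S,\cO_S(2B+(g-k)R))\ \xrightarrow{\ \sim\ }\ H^0(C,\cO_C(R_V+B_{|C})),
\]
and then use $h^0(C,\cO_C(B_{|C}))=1$ for uniqueness. Two points need fixing.

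First, for $h^0(C,\cO_C(B_{|C}))=1$ you need $h^1(S,\cO_S(B-C))=0$, not merely $h^0(S,\cO_S(B-C))=0$. The paper does this explicitly: $B-C\sim -(2B+(2g-2-3k)R)$ and $2B+(2g-2-3k)R$ is big and nef, so Kawamata--Viehweg gives the required vanishing. Your exact sequence only shows the restriction is injective without this.

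Second, and more seriously, your argument that $B$ is not a component of $Q_V$ is wrong. If $B\subset Q_V$ then $Q_V=B+Q'$ with $Q'\in|B+(g-k)R|=|H+2R|$, and from $Q_{V|C}=B_{|C}+R_V$ you get $Q'_{|C}=R_V$. You claim this contradicts a degree count, but
\[
(H+2R)\cdot C=(2g-2)+6=2g+4=\deg R_V,
\]
so the degrees agree perfectly and there is no numerical contradiction. Likewise, the fact that $|H+2R|$ has no base component along $B$ says nothing about whether this particular $Q_V$ contains $B$. The correct obstruction is Corollary~\ref{grandeGianPietro}: the Gaussian section $\Omega_V$ does not lie in the image of $H^0(S,\cO_S(H+2R))\to H^0(C,\omega_C(2L))$, so no divisor in $|H+2R|$ can restrict to $R_V$. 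You mention this corollary at the end as ``the same mechanism,'' but you need to invoke it here directly rather than appeal to degrees.
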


\begin{proof}
By observing that $-C+2B+(g-k)R\sim -A$ one can easily see that the restriction morphism
$|2B+(g-k)R|\to |B_{|C}+R_V|$ is an isomorphism. 

To prove the second claim, we note that $-C+B=-(2B+(2g-2-3k)R)$. Since $2B+(2g-2-3k)R$ is big and nef, then $h^1(S,\cO_S(-C+B))=0$. On the other hand we have $h^0(S,\cO_S(B))=1$, therefore $h^0(C,\cO_C(B_{|C}))=1$, which concludes the proof.
\end{proof}

Next we consider the linear system
$|2B+(2g-3k-2)R|$.

\begin{lemma}\label{altrequadriche} The restriction map $|2B+(2g-3k-2)R|\to |B_{|C}+R_V+(g-2k-2)L|$ is an isomorphism.
\end{lemma}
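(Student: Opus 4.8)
The plan is to mimic the proof of Lemma \ref{primaquadrica}, showing that the restriction map between these two complete linear systems is an isomorphism by verifying it is both surjective and injective via vanishing of the relevant cohomology groups. First I would compute the restriction of the divisor class $2B+(2g-3k-2)R$ to $C$. Using the relation $C\in |3B+(2g-2-3k)R|$ together with $B_{|C}$, $L=R_{|C}$, and the adjunction $K_C=H_{|C}=(B+(g-2-k)R)_{|C}$, I expect a short computation to give $(2B+(2g-3k-2)R)_{|C}\sim B_{|C}+R_V+(g-2k-2)L$, which accounts for the target linear system. Recall here that $R_V\sim (\omega_C\otimes L^{\otimes 2})$, i.e. $R_V\sim K_C+2L$ as a divisor class on $C$, since $\Omega_V\in H^0(C,\omega_C\otimes L^{\otimes 2})$ vanishes on the ramification.

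The main step is then to control the surjectivity and injectivity of the restriction map, which is governed by the exact sequence
\begin{equation}\label{restrizionedue}
0\to \cO_S(2B+(2g-3k-2)R-C)\to \cO_S(2B+(2g-3k-2)R)\to \cO_C(B_{|C}+R_V+(g-2k-2)L)\to 0.
\end{equation}
The twist appearing in the kernel is $2B+(2g-3k-2)R-C\sim -B+(-k)R$, using $C\in|3B+(2g-2-3k)R|$. So the relevant sheaf is $\cO_S(-B-kR)$, and I would show that both its $H^0$ and $H^1$ vanish. The vanishing $h^0(S,\cO_S(-B-kR))=0$ is immediate since $-B-kR$ is anti-effective (indeed not effective, as $B$ is the unique section of minimal self-intersection and the class is negative against the ruling). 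For $h^1$ I would apply Serre duality on $S=\mathbb F_{g-2k-2}$: with $K_S\sim -2B-(g-2k)R$ one has $h^1(S,\cO_S(-B-kR))=h^1(S,\cO_S(K_S+B+kR))$, and $K_S-(-B-kR)=K_S+B+kR\sim -B-(g-3k)R$, so I would verify that the dual divisor $B+(g-2k)R$ (i.e. $-K_S+(-B-kR)$) is big and nef under the hypothesis $2k+2<g\le 3k+4$, giving $h^1=0$ by the relevant Kodaira-type vanishing.

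The hard part will be checking the numerical inequalities for big-and-nefness precisely in the stated range $2k+2<g\le 3k+4$ with $k\ge 1$: recalling from the excerpt that $M=aB+bR$ on $\mathbb F_n$ with $n=g-2k-2$ is nef when $b\ge na\ge 0$ and big and nef when $b\ge na>0$, I would substitute and confirm the boundary cases do not fail. Once both cohomology groups vanish, the long exact sequence of (\ref{restrizionedue}) gives that the restriction map is an isomorphism, establishing the lemma. The only subtlety I anticipate is ensuring the low-genus endpoints $g=6,7$ with $k=1$ (where $g=3k+4$ or nearby) still satisfy the nefness bound strictly enough to kill $h^1$; these are exactly the delicate cases flagged later in the paper, so I would treat the numerics there with care rather than appealing to a generic big-and-nef argument.
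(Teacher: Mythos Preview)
Your approach is exactly the paper's: use the restriction sequence and kill $H^0$ and $H^1$ of the kernel. However, you have an arithmetic slip in computing that kernel. Since $C\in|3B+(2g-2-3k)R|$, one gets
\[
2B+(2g-3k-2)R-C \;=\; -B + \bigl((2g-3k-2)-(2g-2-3k)\bigr)R \;=\; -B,
\]
not $-B-kR$. (There is a second slip in your Serre-dual computation: $-K_S+(-B-kR)=B+(g-3k)R$, not $B+(g-2k)R$; for $k\ge 3$ this divisor fails to be nef, so your proposed big-and-nef argument would not go through as written.)

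With the correct kernel $\cO_S(-B)$ everything collapses: $h^0(S,\cO_S(-B))=0$ because $B$ is an irreducible curve, and $h^1(S,\cO_S(-B))=0$ follows from the sequence $0\to\cO_S(-B)\to\cO_S\to\cO_B\to 0$ together with the regularity of $S$. This is precisely the paper's one-line proof, and it dispenses entirely with the Kodaira-type vanishing and the delicate boundary checks at $g=6,7$ that you were anticipating.
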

\begin{proof} Note that $2B+(2g-3k-2)R-C=-B$. The claim easily follows since $B$ is an irreducible curve and $S$ is a regular surface.
\end{proof}

Next we note that the sublinear system $R_V+|B_{|C}+(g-2k-2)L|$ of $|R_V+B_{|C}+(g-2k-2)L|$ has dimension 
\begin{equation}\label{quantisono} 
\dim (R_V+|B_{|C}+(g-2k-2)L|)=
g-2k-1.
\end{equation}
Indeed, we have $K_C=H_{|C}\sim B_{|C}+(g-2-k)L$. Hence 
$$
h^1(C,\cO_C( B_{|C}+(g-2k-2)L))=h^0(C,\cO_C(kL))=k+1.
$$ 
By Riemann Roch for curves the claim follows.

Now we consider the sublinear system $\Lambda'<|2B+(2g-3k-2)R|$ on $S$ which is isomorphic to the sublinear system $R_V+|B_{|C}+(g-2k-2)L|$ on $C$. 

\begin{corollary} \label{ilsistemachiave}
There exists $Q_1\in \Lambda'$ such that 
$$
\Lambda'=\langle Q_V+|(g-2k-2)R|, Q_1\rangle.
$$
\end{corollary}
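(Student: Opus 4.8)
The plan is to realize $Q_V+|(g-2k-2)R|$ as a hyperplane of $\Lambda'$ and then to take for $Q_1$ any divisor of $\Lambda'$ lying off this hyperplane. First I would verify the containment $Q_V+|(g-2k-2)R|\subseteq\Lambda'$. Given $F\in|(g-2k-2)R|$, I restrict $Q_V+F$ to $C$: by Lemma \ref{primaquadrica} we have $Q_{V|C}\sim B_{|C}+R_V$, and since $R_{|C}\sim L$ the divisor $F_{|C}$ lies in $|(g-2k-2)L|$, so
$$
(Q_V+F)_{|C}\sim R_V+\bigl(B_{|C}+F_{|C}\bigr),\qquad B_{|C}+F_{|C}\in |B_{|C}+(g-2k-2)L|.
$$
Hence $(Q_V+F)_{|C}$ belongs to $R_V+|B_{|C}+(g-2k-2)L|$, which under the restriction isomorphism of Lemma \ref{altrequadriche} is exactly the condition defining $\Lambda'$; thus $Q_V+F\in\Lambda'$.

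Next I would run the dimension count. Multiplication by the section cutting out $Q_V$ embeds $|(g-2k-2)R|$ linearly into $|2B+(2g-3k-2)R|$ with image $Q_V+|(g-2k-2)R|$, a linear subspace of projective dimension $\dim|(g-2k-2)R|=h^0(\bP^1,\cO_{\bP^1}(g-2k-2))-1=g-2k-2$. On the other hand, the isomorphism of Lemma \ref{altrequadriche} together with (\ref{quantisono}) gives $\dim\Lambda'=g-2k-1$. Therefore $Q_V+|(g-2k-2)R|$ is a hyperplane inside $\Lambda'$.

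Finally, since in the range $2k+2<g\le 3k+4$ we have $g-2k-2\ge 1$, the inclusion is strict, and I would choose any $Q_1\in\Lambda'\setminus\bigl(Q_V+|(g-2k-2)R|\bigr)$; adjoining $Q_1$ to the hyperplane spans a linear space of dimension $g-2k-1=\dim\Lambda'$, so that $\Lambda'=\langle Q_V+|(g-2k-2)R|,\,Q_1\rangle$. The only genuine point to check is the compatibility of the two restriction identifications --- the one for $Q_V$ coming from Lemma \ref{primaquadrica} and the one for the full system coming from Lemma \ref{altrequadriche} --- which is what secures the containment in the first step; once this is in place the statement reduces to a bare count of dimensions, so I do not anticipate a real obstacle here.
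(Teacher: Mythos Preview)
Your argument is correct and is exactly the approach the paper takes, only spelled out in greater detail: the paper's proof simply notes that $Q_V+|(g-2k-2)R|$ is a $(g-2k-2)$-dimensional sublinear system of $\Lambda'$ and invokes (\ref{quantisono}) to conclude. One small quibble: in your first display you should write equality rather than $\sim$, since Lemma \ref{primaquadrica} gives the actual identity $Q_{V|C}=B_{|C}+R_V$ of divisors, and it is this equality (not just linear equivalence) that places $(Q_V+F)_{|C}$ in the sublinear system $R_V+|B_{|C}+(g-2k-2)L|$.
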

\begin{proof} Note that $Q_V+|(g-2k-2)R|$ is a $(g-2k-2)$-dimensional sublinear system of $\Lambda$. Hence by (\ref{quantisono}) the claim follows.
\end{proof}

\begin{proposition}
The divisors $Q_V$ and $Q_1$ have no common component.
\end{proposition}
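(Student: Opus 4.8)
The plan is to show that $Q_V$ and $Q_1$ meet properly, i.e. that $\gcd(Q_V,Q_1)=0$. The payoff is immediate: from \eqref{intersectionmatrix} one computes
$$
Q_V\cdot Q_1=(2B+(g-k)R)\cdot(2B+(2g-3k-2)R)=2g+4=\deg R_V,
$$
so once the two divisors share no component, $Q_V\cap Q_1$ is zero-dimensional of length $2g+4$, and since $R_V\subseteq Q_V\cap Q_1$ this forces $R_V=Q_V\cap Q_1$ to be a complete intersection. For the proposition itself I argue by contradiction: suppose $\Delta:=\gcd(Q_V,Q_1)\neq 0$, write $\Delta\sim aB+bR$ with $0\le a\le 2$, and set $E_V:=Q_V-\Delta$, $E_1:=Q_1-\Delta$, which by construction have no common component. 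Then $E_V\cap E_1$ is zero-dimensional of length $E_V\cdot E_1$, and since the part of $R_V$ lying off $\Delta$ is contained in $E_V\cap E_1$, I obtain the basic inequality
$$
\deg R_V\le \deg(R_V\cap\Delta)+E_V\cdot E_1 .
$$
The two inputs are: the values of $\Delta\cdot C$ and $E_V\cdot E_1$ read from \eqref{intersectionmatrix}, and the bound $\deg(R_V\cap\Delta)\le (\text{horizontal part of }\Delta)\cdot C+2v$, where $v$ is the number of fibers occurring in $\Delta$ (a single fiber meets the ramification divisor of a degree-$3$ cover in length at most $2$).

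First I would dispose of the case $a=0$, where $\Delta=vR$. Here $E_V\cdot E_1=2g+4-4v$, and the inequality reads $2g+4\le 2v+(2g+4-4v)$, forcing $v\le 0$; thus $Q_V$ and $Q_1$ share no fiber. Next, for $a=1$ the horizontal part of $\Delta$ is a section $B+b_hR$ with $b_h\ge g-2k-2$; the contributions of the $v$ fibers cancel and the inequality collapses to $2g+4\le g+k+4+b_h$, i.e. $b_h\ge g-k$. Combined with the effectivity bound $b_h\le g-k$ this gives $b_h=g-k$ and $v=0$, whence $E_V\sim B$, so $B$ is a component of $Q_V$ — contradicting Lemma \ref{primaquadrica}.

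The delicate case, and the one I expect to be the main obstacle, is $a=2$. Now $\Delta$ contains an irreducible bisection $D\sim 2B+b_hR$ (plus possibly $v$ fibers), the residuals $E_V,E_1$ are purely vertical, and $E_V\cdot E_1=0$. The intersection count therefore degenerates and yields no numerical contradiction; it only forces $R_V\subseteq\Delta$. At this point the two defining properties of $Q_V$ and $Q_1$ must be used. If $E_V\neq 0$, choose a fiber $R_0\le E_V$; since $R_V\subseteq\Delta\le Q_V-R_0$, every divisor $(Q_V-R_0)+R_0'$ with $R_0'\in|R|$ lies in $|2B+(g-k)R|$ and still contains $R_V$, contradicting the uniqueness of $Q_V$ established in Lemma \ref{primaquadrica}. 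If instead $E_V=0$, then $\Delta=Q_V\le Q_1$, so $Q_1=Q_V+E_1$ with $E_1\sim(g-2k-2)R$, i.e. $Q_1\in Q_V+|(g-2k-2)R|$, contradicting the choice of $Q_1$ in Corollary \ref{ilsistemachiave}. In every case $\Delta=0$.

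The point where the argument is genuinely subtle is exactly this last case: the clean intersection-theoretic bound kills vertical components and sections but becomes vacuous ($E_V\cdot E_1=0$) precisely when $Q_V$ and $Q_1$ could share their entire horizontal part. What rescues the proof is that these two divisors are not generic in their linear systems but are rigidly pinned down — $Q_V$ as the unique member through $R_V$, and $Q_1$ by sitting outside $Q_V+|(g-2k-2)R|$ — so it is the rigidity furnished by Lemma \ref{primaquadrica} and Corollary \ref{ilsistemachiave}, rather than any numerical estimate, that excludes a common bisection and thereby completes the proof.
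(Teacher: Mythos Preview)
Your proof is correct and follows essentially the same strategy as the paper: a case analysis on $a\in\{0,1,2\}$, an intersection-theoretic inequality to handle $a\le 1$, and the rigidity of $Q_V$ and $Q_1$ (Lemma~\ref{primaquadrica} and Corollary~\ref{ilsistemachiave}) to handle $a=2$. The details differ only cosmetically: you control the residual via $E_V\cdot E_1$ (using both $Q_V$ and $Q_1$), whereas the paper uses $(Q_V-\Gamma)^2$ together with the moving part of $\Lambda''$; and in the bisecant case you invoke the uniqueness of $Q_V$ directly, while the paper reads off the contradiction from $Q_{V|C}=B_{|C}+R_V$. Both routes reach the same endpoints (in particular, $B<Q_V$ in the $a=1$ case), so neither buys anything the other does not.
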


\begin{proof}
 Assume by contradiction that there exists a component $\Gamma\in |aB+bR|$ such that $\Gamma<Q_V$ and $\Gamma<  Q_1$ where $g>2k+2$ and $k>1$.

We observe that $\Gamma$ can not be a bisecant divisor on $S$. Indeed, we first note that we can't have
$
\Gamma =Q_V
$,
since $Q_1 \neq Q_V + (g-2k-2)R$ by Corollary \ref{ilsistemachiave}.

Assume now that
\begin{equation}\label{bisec}
\Gamma \sim 2B +bR, \quad Q_V = \Gamma +(g-k-b)R, \quad Q_1 = \Gamma +(2g-3k-2-b)R
\end{equation}
for some $b \le g-k-1$.
As $R_V$ is a subscheme of both $Q_V$ and $Q_1$ and as $R^2=0$, it
would follow that $R_V$ is necessarily a subscheme of $\Gamma$.

On the other hand, we have $Q_{V|C} =B_{|C} +R_V$ by construction, so the relations in (\ref{bisec})
would imply
that 
the subscheme $(g-k-b)R_{|C}$ is contained in $B_{|C}$, which is a contradiction.

Next we claim that we have the following bounds:
\begin{enumerate}\label{trecasi}
\item if $a=0$, then $2g-4k-4\geq b$;
\item if $a=1$, then $b> k+2$.
\end{enumerate}

Indeed, as $R_V$ is the base locus of $R_V+|B_{|C}+(g-2k-2)L|$, we have in particular $R_V>\Gamma_{|C}$.
Consider the sublinear system 
$$
\Lambda'':=\langle (Q_V-\Gamma)+|(g-2k-2)R|, (Q_1-\Gamma)\rangle.
$$ 
The subscheme $R_V-\Gamma_{|C}$ of $C$ is contained in the base locus of $\Lambda''$. Hence
we have 
$$
\Gamma\cdot C+ (Q_V-\Gamma)^2\geq 2g+4.
$$

Assume now $a=0$ and $2g-4k-4\geq b$. In this case $\Gamma\in |bR|$. This would imply that there exists a fiber $R$ of $S\to\mathbb P^1$ such that the subscheme $R_V$ of $C$ contains the subscheme $R_{|C}$,
that is a divisor in the trigonal series. This is impossible.

Finally, assume $a=1$ and $b> k+2$. In this case $Q_V=U+\Gamma$ where $U\in |B+(g-k-b)R|$. 
Since $b>k+2$, then $U=B+U'$ where $U'\in |(g-k-b)R|$. This implies that $B<Q_V$. Then $R_V=(Q_V-B)_{|C}$ and this contradicts Lemma \ref{grandeGianPietro}. 

\end{proof}

\begin{corollary} \label{kappamaggiore}
The subscheme $R_V$ is a complete intersection of $Q_V$ and $Q_1$.

As a consequence we have ${\rm{Bs}}(\Lambda)=R_V$.
\end{corollary}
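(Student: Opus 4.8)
The plan is to prove the two assertions in turn, the complete intersection statement first and the base locus statement as a quick consequence. For the complete intersection I would argue by a length count. The preceding Proposition guarantees that $Q_V$ and $Q_1$ share no component, so their scheme-theoretic intersection $Q_V\cap Q_1$ is a zero-dimensional subscheme of $S$ whose length equals the intersection number $Q_V\cdot Q_1$. First I would compute this number from the classes $Q_V\sim 2B+(g-k)R$ and $Q_1\sim 2B+(2g-3k-2)R$ using the formulae $B^2=2k+2-g$, $BR=1$, $R^2=0$; this yields $Q_V\cdot Q_1=2g+4$. On the other side, $R_V$ is the zero scheme of the Gaussian section $\Omega_V\in H^0(C,\omega_C\otimes L^{\otimes 2})$, so $\deg R_V=\deg(\omega_C\otimes L^{\otimes 2})=2g+4$ (equivalently, Riemann--Hurwitz for the triple cover). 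It then remains to observe that $R_V$ sits inside both quadrics: $R_V\subseteq Q_V$ because $Q_{V|C}=B_{|C}+R_V$ by Lemma \ref{primaquadrica}, and $R_V\subseteq Q_1$ because $Q_1$ lies in $\Lambda'$, which under the isomorphism of Lemma \ref{altrequadriche} corresponds to a member of $R_V+|B_{|C}+(g-2k-2)L|$. Thus $R_V\subseteq Q_V\cap Q_1$, and since both are zero-dimensional of the same length $2g+4$, they coincide, i.e. $R_V=Q_V\cap Q_1$.

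For the base locus I would produce, inside $\Lambda\subseteq|2H|$, two families of divisors cut out by the two quadrics. Since $2H-Q_V\sim(g-4-k)R$ and $2H-Q_1\sim(k-2)R$, the hypotheses $g\ge k+4$ and $k\ge 2$ make both $|(g-4-k)R|$ and $|(k-2)R|$ nonempty, and being pulled back from linear systems on $\mathbb P^1$ they are base point free. The divisors $Q_V+D$ with $D\in|(g-4-k)R|$ and $Q_1+D'$ with $D'\in|(k-2)R|$ all lie in $|2H|$ and contain $R_V$, hence define sections in $\Lambda$. Because $|(g-4-k)R|$ and $|(k-2)R|$ are base point free, the base ideal of the first family collapses to $\cI_{Q_V,S}$ and that of the second to $\cI_{Q_1,S}$; so the base scheme of the subsystem of $\Lambda$ they span is $V(\cI_{Q_V,S}+\cI_{Q_1,S})=Q_V\cap Q_1=R_V$. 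This forces ${\rm Bs}(\Lambda)\subseteq R_V$, and the reverse inclusion is immediate since every member of $\Lambda$ vanishes on $R_V$ by definition. Hence ${\rm Bs}(\Lambda)=R_V$.

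The delicate point, and the one I would be most careful about, is to keep the identifications scheme-theoretic rather than merely set-theoretic: the equality $R_V=Q_V\cap Q_1$ rests on the length of the complete intersection being exactly $\deg R_V$, which holds precisely because $Q_V$ and $Q_1$ have no common component, and the base locus computation rests on the base point freeness of the $|mR|$ systems so that the two base ideals collapse exactly to $\cI_{Q_V,S}$ and $\cI_{Q_1,S}$. Everything else is the routine intersection-number and cohomology bookkeeping already set up in the preceding lemmas, and the whole argument relies on the standing hypothesis $k\ge 2$, which is needed both for $|(k-2)R|\neq\emptyset$ and in the no-common-component Proposition.
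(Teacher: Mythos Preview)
Your argument is correct and matches the paper's proof. The length count $Q_V\cdot Q_1=2g+4=\deg R_V$ together with the inclusion $R_V\subseteq Q_V\cap Q_1$ is exactly what the paper does for the first claim; for the base locus the paper phrases it as ``$\Lambda'$ can be embedded in $\Lambda$ by multiplication of a base point free linear system'', which amounts to the same two families $Q_V+|(g-4-k)R|$ and $Q_1+|(k-2)R|$ you write down explicitly (the paper routes through $\Lambda'=\langle Q_V+|(g-2k-2)R|,Q_1\rangle$ first and then multiplies by $|(k-2)R|$, but the content is identical).
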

\begin{proof} 
We have $(2B+(g-k)R)\cdot(2B+(2g-3k-2)R)=2g+4$. Since $R_V$ is a subscheme both of $Q_V$ and $Q_1$ and it is of length $2g+4$, the claim follows.

Finally, as $\Lambda'$ can be embedded in $\Lambda$ by multiplication of a base point free linear system we have the last assertion.
\end{proof}

%%%%%%%%%%%IL CASO g=7 e k=1%%%%%%%%%%%%%%%%%%

\subsection{Proof of Theorem \ref{k1}}
In this section we shall treat the remaining cases $k=1$ and $g=6,7$.

\begin{proposition}\label{casosette} If $g=7$ and $k=1$ then the base locus od $\Lambda$ satisfies
$$
{\rm Bs} (\Lambda )= \Gamma \sim 2B+6R.
$$
\end{proposition}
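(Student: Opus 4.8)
The plan is to specialize the complete-intersection strategy from the previous section to the numerical data $g=7$, $k=1$, where the ruled surface is $S \cong \mathbb{F}_{g-2-2k} = \mathbb{F}_3$. Here $B^2 = 2k+2-g = -3$, the hyperplane class is $H = B + (g-2-k)R = B + 4R$, and $C \in |3B + (2g-2-3k)R| = |3B + 9R|$. The ramification divisor $R_V \subset S$ has degree $\deg R_V = 2g+4 = 18$, and $\Lambda \subseteq |2H| = |2B + 8R|$. From Lemma \ref{primaquadrica} there is a unique $Q_V \in |2B + (g-k)R| = |2B + 6R|$ containing $R_V$, with $Q_{V|C} = B_{|C} + R_V$ and $B$ not a component of $Q_V$. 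The second quadric from Corollary \ref{ilsistemachiave} lives in $|2B + (2g-3k-2)R| = |2B + 9R|$.

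First I would explain why the clean complete-intersection conclusion of Corollary \ref{kappamaggiore} breaks down precisely here. That argument required the hypothesis $k \ge 2$ to guarantee $h^0(S, \cO_S(2H - Q_1)) \neq 0$, i.e.\ that the second quadric $Q_1$ actually embeds into $\Lambda$ via multiplication by a section. For $k = 1$ we have $2H - Q_1 \sim (2B + 8R) - (2B + 9R) = -R$, so $h^0(S, \cO_S(2H - Q_1)) = 0$ and $Q_1$ does \emph{not} contribute to $\Lambda$. Thus $\Lambda$ is forced to acquire a one-dimensional fixed component rather than cutting out $R_V$ as an isolated zero-dimensional scheme. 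The heart of the proposition is to identify this fixed component explicitly as a divisor in $|2B + 6R|$.

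The key computation I would carry out is to show ${\rm Bs}(\Lambda) = Q_V \sim 2B + 6R$. By Remark \ref{r}, $\Lambda = \mathbb{P}(H^0(S, \cO_S(2H) \otimes \cI_{R_V,S}))$, and $R_V \subset Q_V$ with $Q_V \in |2B+6R|$. Since $2H - Q_V \sim (2B+8R)-(2B+6R) = 2R$ and $h^0(S,\cO_S(2R)) = 3 > 0$, the sublinear system $Q_V + |2R|$ sits inside $\Lambda$; this shows $Q_V$ lies in the base locus of every member of $\Lambda$, giving ${\rm Bs}(\Lambda) \supseteq Q_V$. For the reverse inclusion I would show that $\Lambda$ has no base points off $Q_V$: I expect that any divisor in $\Lambda$ containing $R_V$ must split off $Q_V$, because $R_V$ (being contained in the unique $Q_V$ of Lemma \ref{primaquadrica}) imposes enough conditions that the residual system $2H - Q_V \sim 2R$ is base-point free on $S$, being pulled back from $\cO_{\mathbb{P}^1}(2)$ under $\pi$. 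One then checks $\dim \Lambda$ via Riemann--Roch / the isomorphism in Remark \ref{r} and confirms it matches $\dim(Q_V + |2R|) = h^0(S,\cO_S(2R)) - 1 = 2$, so no further base locus survives and ${\rm Bs}(\Lambda) = Q_V$.

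The main obstacle will be rigorously ruling out any additional base locus beyond $Q_V$, i.e.\ establishing the reverse inclusion ${\rm Bs}(\Lambda) \subseteq Q_V$ and, crucially, that the fixed component is exactly $Q_V$ and not something larger. This requires controlling $H^0(S, \cO_S(2H) \otimes \cI_{R_V,S})$ precisely and verifying that, after removing the fixed $Q_V$, the moving part $|2R|$ genuinely has no further base points meeting $C$ in $R_V$ — in particular that no fiber $R$ is forced into the base scheme, which would again contradict the structure of $R_V$ relative to the trigonal series as exploited in the case $a=0$ of the previous proposition. The invocation of Corollary \ref{grandeGianPietro} (the Gaussian section avoiding the image of $r$) is what prevents $\Lambda$ from degenerating into the forbidden configuration $H + 2R$, and I would lean on it exactly as in the $a=1$ case above to close the argument.
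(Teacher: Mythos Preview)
Your strategy is essentially the paper's: identify the unique divisor in $|2B+6R|$ through $R_V$ (the paper writes this class as $2A$ with $A=B+3R$ the tautological class, and obtains uniqueness directly from the exact sequence $0\to\cO_S(-A)\to\cO_S(2A)\to\cO_C(K_C+2L)\to 0$ together with $H^1(S,\cO_S(-A))=0$; your route via Lemma~\ref{primaquadrica} is equivalent, since here $B\cdot C=0$ so $B_{|C}=0$ and $Q_{V|C}=R_V$), and then conclude $\Lambda=Q_V+|2R|$ by a dimension count.

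One logical slip to correct: from the inclusion $Q_V+|2R|\subset\Lambda$ you deduce that ``$Q_V$ lies in the base locus of every member of $\Lambda$'', i.e.\ ${\rm Bs}(\Lambda)\supseteq Q_V$. That does not follow; a sublinear system only yields ${\rm Bs}(\Lambda)\subseteq {\rm Bs}(Q_V+|2R|)=Q_V$, the \emph{opposite} inclusion. The direction you actually need, ${\rm Bs}(\Lambda)\supseteq Q_V$, is precisely what your dimension count supplies: once $\dim\Lambda=h^0(C,K_C-2L)-1=2=\dim(Q_V+|2R|)$, the inclusion of linear systems is an equality and every member of $\Lambda$ contains $Q_V$. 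So your argument is correct, but the two inclusions are interchanged in your write-up. The closing appeal to Corollary~\ref{grandeGianPietro} is not needed for this proposition; it re-enters only in the irreducibility analysis of Proposition~\ref{ilcasettosette}.
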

\begin{proof} In this case we have $S \cong \mathbb F_3$, $H \sim B +4 R$ and $C\sim 3B+9R=3A$,
where $A \sim B+3R$ denotes the tautological divisor of $\mathbb F_3$.

Note that $\deg\ R_V=2g+4=18=C\cdot 2A$. Since $2A_{|C}\equiv K_C +2L$, and $H^1(S,\cO_{S}(-A))=0$,
 by computing the cohomology of
$
0\to\cO_{S}(-A)\to\cO_{S}(2A)\to\cO_{C}(K_C+2L)\to 0
$
it easily follows that it exists a unique $\Gamma\in |2A|$ such that $\Gamma_{|C}=R_V$. 
\end{proof}

We can describe very explicitly the linear subspace $T<H^1(C,T_C)$. Let $\langle t_0, t_1\rangle=H^0(S,\cO_S(R))$ be a basis of the pencil $\pi\colon S\to\mathbb P^1$ and let $X_1\in H^0(S,\cO_S(A))$ be an irreducible section. If $X_{\infty}\in H^0(S,\cO_S(B))$ then any trigonal curve $C$ can be written with an equation of the type
\begin{equation}\label{eqaziondi C}
C:=( X_{1}^{3}+\alpha_{3}(t_0,t_1)X_{1}^{2} X_{\infty}+\alpha_{6}(t_0,t_1)X_{1}X_{\infty}^2+ \alpha_{9}(t_0,t_1)X_{\infty}^{3}=0)
\end{equation}
where $\alpha_{j}(t_0,t_1)\in\mathbb C[t_0,t_1]_{[j]}$ are general homogeneous polynomials of degree $j=3,6,9$,
so that $C$ is smooth. A simple computation shows that 
\begin{equation}\label{eqaziondi GAMMA}
\Gamma:= ( 3X_{1}^{2}+2\alpha_{3}(t_0,t_1)X_{1} X_{\infty}+\alpha_{6}(t_0,t_1)X_{\infty}^2=0)
\end{equation}
\begin{remark} By Proposition \ref{casosette}
we can write:
$$
T=\langle I_2, \mathbb C[t_0,t_1]_{[2]}\cdot\Gamma\rangle^{\perp}.
$$
\end{remark}

%%%%%%%%%%%IL CASO g=6 e k=1%%%%%%%%%%%%%%%%%%

\begin{proposition}\label{casosei} 
If $g=6$ and $k=1$ then we have
$$
{\rm Bs}\ \Lambda = \Gamma \sim 2B +5R.
$$
\end{proposition}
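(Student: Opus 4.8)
The plan is to show that $\Lambda$ is a pencil and that it already contains the pencil cut out by $Q_V$ together with the ruling, so that the two must coincide. Throughout $g=6$ and $k=1$, so $S\cong\mathbb{F}_2$ with $B^2=-2$, $H\sim B+3R$, $C\sim 3B+7R$, $2H\sim 2B+6R$, and $\deg R_V=2g+4=16$. First I would compute $\dim\Lambda$. By Remark \ref{r} the space defining $\Lambda$ satisfies
$$
H^0(S,\cO_S(2H)\otimes\cI_{R_V,S})\cong H^0(C,\omega_C^{\otimes 2}(-R_V)),
$$
and since $R_V\sim K_C+2L$ the bundle on the right is $\cO_C(K_C-2L)$, of degree $4$. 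By Serre duality $h^1(C,K_C-2L)=h^0(C,2L)$, and since the Maroni degree is $k=1$, Definition \ref{maronidegree} gives $h^0(C,L^{\otimes 2})=3$. Riemann--Roch then yields $h^0(C,K_C-2L)=(4-6+1)+3=2$, hence $\dim\Lambda=1$.

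Next I would produce an explicit pencil inside $\Lambda$. By Lemma \ref{primaquadrica} there is a unique $Q_V\in|2B+5R|$ containing $R_V$. Since $2H-Q_V\sim R$ and $R_V\subset Q_V$, multiplication by the equation of $Q_V$ gives an injection $H^0(S,\cO_S(R))\hookrightarrow H^0(S,\cO_S(2H)\otimes\cI_{R_V,S})$; equivalently $Q_V+|R|$ is a pencil contained in $\Lambda$. As $\dim\Lambda=1$, this forces $\Lambda=Q_V+|R|$. Because $|R|$ is base point free, the members of $\Lambda$ are the divisors $Q_V+R'$ with $R'\in|R|$, and their common intersection is exactly $Q_V$ (a point off $Q_V$ lies on a single fiber of $\pi$). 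Hence ${\rm Bs}(\Lambda)=Q_V\sim 2B+5R$, which is the asserted $\Gamma$.

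The point worth stressing is why this case is singled out: here $2H-Q_1\sim -2R$, so $h^0(S,\cO_S(2H-Q_1))=0$ and the complete-intersection argument of Corollary \ref{kappamaggiore} breaks down---$R_V$ is not cut out as a scheme, and instead a whole divisor $Q_V$ enters the base locus. The substance of the proof is therefore concentrated in the dimension count $\dim\Lambda=1$, which itself rests on the equality $h^0(C,2L)=3$ forced by $k=1$; once this is in hand, the identification $\Lambda=Q_V+|R|$ and the determination of the base locus are formal. The only verification I would be careful about is that the scheme structure of $R_V$ viewed on $C$ agrees with the one viewed on $S$, so that the isomorphism of Remark \ref{r} restricts correctly at the level of ideal sheaves---but this is the same identification already used throughout Section 3.
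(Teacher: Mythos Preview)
Your argument is correct and follows essentially the same line as the paper: identify the unique divisor $Q_V\in|2B+5R|$ through $R_V$ (this is the paper's $\Gamma$, furnished by Lemma~\ref{primaquadrica}), observe that $2H-Q_V\sim R$ so that $Q_V+|R|\subset\Lambda$, and conclude that this exhausts $\Lambda$, whence the base locus is $Q_V$. The paper's own proof is terse---it refers back to the $g=7$ case and highlights the geometric wrinkle that here $\Gamma\cdot C=17>16=\deg R_V$, the extra point being $p=B\cap C$---whereas you pin down $\dim\Lambda=1$ directly by Riemann--Roch on $C$ using $h^0(C,2L)=3$ from the Maroni condition; this makes the equality $\Lambda=Q_V+|R|$ immediate and arguably cleaner than the paper's sketch.
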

\begin{proof} In this case we have $S\cong \mathbb F_2$, $H \sim B +3 R$, $C\sim 3B+7R$.
This case differs from the analogue case where $g=7$ because all quadrics vanishing on $R_V$ actually vanish also on a scheme of length $17$ on $C$; note that $16={\rm{deg}}R_V$. We observe that the point $B\cap C=\{p\}$ is a subscheme of 
$
\Gamma_{|C}=p+R_V.$
So we can conclude in a similar way as in Proposition \ref{casosette}.
\end{proof}
 \begin{remark} Note that if $g=6$ and $k=1$, by Proposition \ref{casosei} we can write: 
 $$
T=\langle I_2, \mathbb C[t_0,t_1]_{[1]}\cdot\Gamma\rangle^{\perp}.
$$
\end{remark}

%%%%%%%%%FAMIGLIE HYPERELLITTICHE%%%%%%%%%%%%%%%%%%%%%%%%%%

 \section{Hyperelliptic families and trigonal deformations}

 Let $\cY\hookrightarrow \cM_g$ be a closed irreducible subvariety where $g\geq 5$ and ${\rm{dim}}\cY=2g-1$. Assume that for a very general $[C]\in\cY$ there exists a dominant morphism $J(D)\twoheadrightarrow J(C)$ where $[D]$ belongs to the hyperelliptic locus $\cH_{g'}$, $g'\geq g$. By standard arguments we can assume the existence of a family of surjective maps of Jacobians:
 \begin{equation}\label{mapsofjacobian}
\begin{tikzcd}
\cJ{\mathcal{D}} \arrow[rr,"f"] \arrow[dr,swap,"\rho"] && \cJ\cC \arrow[dl,"\rho'"] \\
& \cU
\end{tikzcd}
 \end{equation}
such that the moduli map $\Phi\colon \cU\to\cM_g$ induces a generically finite dominant map $\cU\to\cY$. Moreover we can also assume that $f_u\colon J(D_u)\twoheadrightarrow J(C_u)$ and $[D_u]\in \cH_{g'}$, for every $u\in\cU$. Let $W_u<H^0(D_u,\omega_{D_u})$ be the isomorphic image of $H^0(C_u,\omega_{C_u} )$ via the codifferential of $f_u$. In \cite[Proof of Theorem 1.6, p. 13]{NP} they show that if $C_u$ is not hyperelliptic there exists a rational dominant map $D_u\dashrightarrow Z\subseteq  |W_u|^{*}=\bP(H^{1}(C_u,\cO_{C_u}))$ {\it{where $Z$ is a curve contained in the locus of rank $1$ trigonal deformations of $C_u$}}.

\begin{proposition}\label{quasifatto}
If $g\geq 8$ or $g=6,7$ and $k>1$ then $\cH_g$ is the unique closed irreducible subvariety ${\mathcal Y}\subset\cM_g$ of dimension $2g-1$ such that for its generic element $[C]\in {\mathcal Y}$ there exists $[D]\in\cH_{g'}$ such that $J(D)\twoheadrightarrow J(C)$.
\end{proposition}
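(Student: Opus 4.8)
The plan is to deduce the statement by combining the dichotomy of \cite[Theorem 1.6]{NP} with the dimension count for $\Gamma$ provided by Theorem \ref{k2}. First I would observe that $\cH_g$ itself qualifies: it is closed, irreducible, of dimension $2g-1$, and for any $[C]\in\cH_g$ one may take $[D]=[C]\in\cH_g$ with $g'=g$ and $J(D)=J(C)$, the identity being a surjection. Thus the entire content lies in the uniqueness assertion.

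For uniqueness, I would take an arbitrary closed irreducible $\mathcal Y$ as in the statement and apply \cite[Theorem 1.6]{NP} to obtain either $\mathcal Y\subseteq\cH_g$ or $\mathcal Y\subseteq\cT_g$. In the first case, since $\dim\mathcal Y=2g-1=\dim\cH_g$ and $\cH_g$ is irreducible, the inclusion is forced to be an equality $\mathcal Y=\cH_g$. It therefore remains only to exclude the trigonal alternative under the numerical hypotheses.

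Assuming $\mathcal Y\subseteq\cT_g$, I would pass to the generic $[C]\in\mathcal Y$, whose Maroni degree $k$ is then constant; under the hypotheses ($g\ge 8$, or $g=6,7$ with $k>1$) this $k$ lies in the range covered by Theorem \ref{k2}. Invoking the family construction recalled above (diagram (\ref{mapsofjacobian}) and \cite[Proof of Theorem 1.6, p.~13]{NP}), the surjection $J(D)\twoheadrightarrow J(C)$ produces a dominant rational map $D\dashrightarrow Z\subseteq\bP(H^1(C,\cO_C))$ with $Z$ a curve inside the locus $\Gamma$ of rank-$1$ trigonal deformations, and such that $\kappa_2^{-1}(Z)\subset\bP^{g-1}$ is non-degenerate. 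This gives $\dim\Gamma\ge 1$, contradicting the equality $\dim\Gamma=0$ of Theorem \ref{k2}. Hence $\mathcal Y\subseteq\cT_g$ cannot occur, and we conclude $\mathcal Y=\cH_g$.

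The hard part is entirely absorbed into Theorem \ref{k2} and the reduction of \cite{NP}; the present argument is only their logical assembly. The one point demanding care is the genericity bookkeeping: verifying that the Maroni degree of the generic $[C]\in\mathcal Y$ is constant on $\mathcal Y$ and lands in the asserted range (for $g=6,7$ this is precisely the hypothesis $k>1$, namely $k=2$, which excludes the stratum $\cT^1_g$ handled separately in Theorem \ref{k1}), together with the fact from \cite{NP} that $Z$ is genuinely non-degenerate, so that it yields a positive-dimensional component of $\Gamma$ rather than collapsing to a point.
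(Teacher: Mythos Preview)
Your proposal is correct and follows essentially the same route as the paper: assume the general $[C]\in\cY$ is not hyperelliptic, invoke \cite[Theorem 1.6]{NP} to land in $\cT_g$, and then use the existence of the curve $Z\subset\Gamma$ from the \cite{NP} construction to contradict the zero-dimensionality of $\Gamma$. The only cosmetic difference is that the paper phrases the contradiction via Corollary \ref{kappamaggiore} (i.e.\ ${\rm Bs}(\Lambda)=R_V$), whereas you cite the equivalent summary statement Theorem \ref{k2}; your additional remarks on non-degeneracy of $Z$ and Maroni-degree bookkeeping are accurate but not needed here, since $\dim\Gamma=0$ already suffices.
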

\begin{proof} Assume that the general $[C]\in\cY$ is not hyperelliptic. By \cite[Theorem]{NP} it follows that $C$ is trigonal. In particular the curve $Z$ recalled above is a curve contained inside the fix part of $\Lambda$. This 
contradicts Corollary \ref{kappamaggiore}. 
\end{proof}

\subsection{Rational curves of rank-$1$ trigonal deformations} 
If $g=6,7$ and $k=1$ there can exist rational curves in the locus of rank $1$ trigonal deformations, but we claim that they cannot be non degenerate. 
By \cite[Theorem]{NP} and by Proposition \ref{quasifatto}, to show our claim, we have to study the rational curves inside the schematic intersection 
$$
\Gamma^{1}:=\Gamma\cap \bP(T^{1}_{C})
$$
where $g=6,7$. Note that $\Gamma^{1}$ must be a proper subscheme of $\Gamma$.

\begin{proposition}\label{ilcasettosette} If $g=7$, $k=1$ and $C$ is generic then $\Gamma$ is smooth, irreducible and $\Gamma^{1}$ is a finite scheme. Moreover if $\Gamma$ is reducible then $\Gamma^{1}$ does not contain any non degenerate curve.
\end{proposition}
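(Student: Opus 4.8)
The plan is to treat $\Gamma$ as a curve in the scroll $S\cong\mathbb F_3$ and to show that $\kappa_2(\Gamma)$ spans the whole of $\bP(T)$; this single linear-span statement will force $\Gamma\not\subset\bP(T^1_C)$ and hence the finiteness of $\Gamma^1$, while the reducible case is handled by a degree bound.

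First I would record the geometry of $\Gamma$. By Proposition \ref{casosette} we have $\Gamma\in|2A|=|2B+6R|$, cut out by \eqref{eqaziondi GAMMA}, namely $3X_1^2+2\alpha_3 X_1X_\infty+\alpha_6 X_\infty^2=0$. Since $\Gamma\cdot R=(2B+6R)\cdot R=2$ and the leading coefficient $3$ never vanishes, $\Gamma$ contains no fibre of $\pi\colon S\to\bP^1$ and $\pi|_\Gamma\colon\Gamma\to\bP^1$ is a double cover. Adjunction on $S$, using $K_S\sim -2B-5R$ so that $2A+K_S\sim R$, gives $p_a(\Gamma)=1+\tfrac12\,2A\cdot(2A+K_S)=1+\tfrac12\cdot 2=2$. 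The branch divisor of $\pi|_\Gamma$ is the zero locus of the discriminant $\alpha_3^2-3\alpha_6\in H^0(\bP^1,\cO(6))$; for generic $\alpha_3,\alpha_6$ it consists of six distinct points, and since moreover $\Gamma\cdot B=0$ together with $\Gamma|_B=3X_1^2|_B\neq 0$ shows $\Gamma$ is disjoint from the negative section, $\Gamma$ is a smooth irreducible curve of genus $2$.

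The key step is the linear span of $\kappa_2(\Gamma)$ inside $\bP(H^1(C,T_C))=\bP(H^0(C,\omega_C^{\otimes 2})^\vee)$. Under the identification $H^0(S,\cO_S(2H))\cong H^0(C,\omega_C^{\otimes 2})$ the annihilator $T^\perp$ has dimension $(3g-3)-\dim T=18-15=3$. By Proposition \ref{casosette} the base locus of $\Lambda=\bP(T^\perp)$ is the divisor $\Gamma$, so every section in $T^\perp$ vanishes on $\Gamma$; that is, $T^\perp\subseteq H^0(S,\cO_S(2H-\Gamma))=H^0(S,\cO_S(2R))$. As $\dim H^0(S,\cO_S(2R))=3=\dim T^\perp$, this inclusion is an equality, so $T^\perp$ is exactly the space of sections of $\cO_S(2H)$ vanishing on $\Gamma$. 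A section $s\in H^0(S,\cO_S(2H))$ cuts out a hyperplane containing $\kappa_2(\Gamma)$ precisely when $s\in T^\perp$, whence $\langle\kappa_2(\Gamma)\rangle=\bP\{\xi:\ s(\xi)=0\ \forall\,s\in T^\perp\}=\bP((T^\perp)^\perp)=\bP(T)$ by biduality. Finiteness of $\Gamma^1$ now follows: for $[C]$ generic, $\cT^1_7$ is a proper irreducible subvariety of $\cT_7$ (the generic genus-$7$ trigonal curve has $k=2$), of dimension $h^0(\mathbb F_3,3A)-1-\dim\mathrm{Aut}(\mathbb F_3)=21-8=13<15=\dim\cT_7$, so at such a smooth point $T^1_C\subsetneq T$ and hence $\bP(T^1_C)\subsetneq\bP(T)=\langle\kappa_2(\Gamma)\rangle$. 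Thus $\Gamma\not\subset\bP(T^1_C)$, and since $\Gamma$ is irreducible, $\Gamma^1=\Gamma\cap\bP(T^1_C)$ is a proper closed subscheme of $\Gamma$, hence finite.

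For the last assertion, suppose $\Gamma$ is reducible. As $\Gamma$ contains neither a fibre (the leading coefficient is nonzero) nor the negative section $B$ (since $\Gamma\cdot B=0$ while $\Gamma|_B\neq 0$), every component is a section of $\pi$, and since $\Gamma\cdot R=2$ there are exactly two, say $\Gamma_i\in|B+c_iR|$ with $c_1+c_2=6$ and $c_i=\Gamma_i\cdot B+3\ge 3$, forcing $\Gamma_i\in|A|$. Any curve contained in $\Gamma^1\subseteq\Gamma$ must be one such component $Z_1$, and $\kappa_2^{-1}(Z_1)\subset\bP^{g-1}$ is then the canonical image of a curve in $|A|$, of degree $H\cdot A=4$. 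A non-degenerate curve in $\bP^{g-1}=\bP^6$ has degree at least $g-1=6$, so $\kappa_2^{-1}(Z_1)$ is degenerate; thus $\Gamma^1$ contains no non-degenerate curve. The main obstacle is the span identification $\langle\kappa_2(\Gamma)\rangle=\bP(T)$: everything hinges on recognizing, through ${\rm Bs}(\Lambda)=\Gamma$ and the equality $\dim T^\perp=h^0(\cO_S(2R))=3$, that $T^\perp$ is exactly the set of quadrics vanishing on $\Gamma$, after which biduality closes the argument at once.
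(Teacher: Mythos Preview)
Your proof is correct and follows the same overall architecture as the paper's: smoothness and irreducibility of $\Gamma$ from the explicit equation \eqref{eqaziondi GAMMA}, finiteness of $\Gamma^1$ from irreducibility together with $\Gamma\not\subset\bP(T^1_C)$, and in the reducible case the identification of the components as curves in $|A|$, hence of degree $H\cdot A=4<6$ and therefore degenerate in $\bP^6$.

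Two points are worth flagging as genuine, if minor, differences. First, the paper's proof of finiteness of $\Gamma^1$ is a one-line appeal to Proposition~\ref{casosette} and the explicit equations; your span computation $\langle\kappa_2(\Gamma)\rangle=\bP(T)$, obtained by matching $\dim T^{\perp}=3=h^0(S,\cO_S(2R))$, is exactly what is needed to make that appeal rigorous, and it is a clean way to do it. Second, in the reducible case the paper excludes $B$ as a component via Corollary~\ref{grandeGianPietro} (the Gaussian argument) and excludes fibres via the observation that no trigonal divisor $R_{|C}$ sits inside $R_V$; you instead use the elementary facts that the $X_1^2$-coefficient of \eqref{eqaziondi GAMMA} is $3\neq 0$ and that $\Gamma\cdot B=0$ with $\Gamma|_B=3(X_1|_B)^2\neq 0$. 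Your route is more self-contained and avoids invoking the Gaussian Lemma, at the cost of relying on the specific normal form \eqref{eqaziondi C}; the paper's route is more conceptual and would transfer to situations without such an explicit equation. Either way, once both components are forced into $|A|$ the conclusion is identical.
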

\begin{proof} By Proposition \ref{casosette} and the explicit equations (\ref{eqaziondi C}) and (\ref{eqaziondi GAMMA}) the first claim follows. Assume now that $C$ is smooth but non generic and that $\Gamma$ 
is a union of at least two components. By Lemma \ref{grandeGianPietro}, $\Gamma$ cannot contain $B$ as one of its components. Hence $\Gamma= D_1+D_2$ where $D_1,D_2\in |A|$ since $R_{|C}$ is not a subdivisor of $R_V$. This implies that both components of $\Gamma$ are degenerate curves for the embedding $\phi_{|H|}\colon S\to\bP^6$. In particular  $\Gamma^1=\bP(T^{k}_{C})_{|\Gamma}$ does not contain non degenerate rational curves.
\end{proof} 

\begin{proposition}\label{ilcasettosei} If $g=6$, $k=1$ then either $\Gamma$ is irreducible or if it is reducible it does not contain any non degenerate rational curve. In particular $\Gamma^1$ does not contain any non degenerate rational curve.
\end{proposition}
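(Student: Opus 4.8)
The plan is to mirror the structure of Proposition \ref{ilcasettosette}, adapting it to the geometry of $\mathbb{F}_2$. By Proposition \ref{casosei} we already know the fixed part of $\Lambda$ is a single divisor $\Gamma \sim 2B+5R$ on $S\cong\mathbb{F}_2$, cut out on $C$ as $\Gamma_{|C}=p+R_V$ where $\{p\}=B\cap C$. First I would write down an explicit equation for $\Gamma$ analogous to (\ref{eqaziondi GAMMA}). With $H\sim B+3R$ and $C\sim 3B+7R$, a general trigonal curve of genus $6$ with $k=1$ has an equation of Weierstrass type in the variables $X_1\in H^0(S,\cO_S(A))$ and $X_\infty\in H^0(S,\cO_S(B))$, and differentiating in the fibre direction produces a section whose zero scheme is $\Gamma$. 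This gives a concrete handle on the irreducibility question for generic $C$: the discriminant of the resulting binary form governs when $\Gamma$ stays irreducible, and one checks by a direct dimension count that the generic $C$ yields an irreducible $\Gamma$.

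Next I would analyse the reducible case. Since $\Gamma \sim 2B+5R$, any splitting $\Gamma = D_1 + D_2$ must distribute the classes so that $D_1+D_2 \sim 2B+5R$. By Lemma \ref{grandeGianPietro} (Corollary \ref{grandeGianPietro}), $B$ cannot occur as a component of $\Gamma$, since otherwise the Gaussian section would lie in the forbidden image; this rules out any component of the form $B+\ast R$. Likewise a fibre component $aB+bR$ with $a=0$ would force a divisor of the trigonal series to sit inside $R_V$, which is impossible as in the earlier propositions. The only surviving decomposition is therefore into two divisors each $\sim B+\tfrac{5}{2}R$ — but this is not integral, so one must instead have one component $\sim 2B + b_1 R$ and the other $\sim (5-b_1)R$ purely fibral, which is again excluded by the trigonal-series argument. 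Hence the admissible reducible configurations are genuinely constrained, and each surviving piece is a degenerate curve for the embedding $\phi_{|H|}\colon S\to\bP^{g-1}$.

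The heart of the argument is then the same degeneracy observation used for $g=7$: any component $D$ of a reducible $\Gamma$ lying in a class $aB+bR$ with $a\le 1$ satisfies $D\cdot H$ small enough that $\kappa_2^{-1}(D)$ spans a proper linear subspace of $\bP^{g-1}$, i.e. is degenerate. Concretely I would verify that for $g=6$, $H\sim B+3R$, any admissible component has $D\cdot H \le g-1$ forcing the image to be degenerate. Since $\Gamma^1 = \Gamma \cap \bP(T^1_C)$ is contained in $\Gamma$, it cannot contain a non-degenerate rational curve either when $\Gamma$ is reducible. Combined with the finiteness of $\Gamma^1$ in the irreducible case (which follows, as for $g=7$, by intersecting $\Gamma$ with the extra linear condition defining $\bP(T^1_C)$ and checking the intersection is zero-dimensional), this yields the statement.

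The main obstacle I anticipate is the case analysis of the reducible decompositions: unlike $g=7$ where $\Gamma\sim 2A$ splits cleanly as $D_1+D_2$ with $D_i\in|A|$, here $\Gamma\sim 2B+5R$ on $\mathbb{F}_2$ admits several a priori numerical splittings, and one must carefully use the presence of the point $p$ in $\Gamma_{|C}$ together with Corollary \ref{grandeGianPietro} to eliminate the ones that would produce a non-degenerate component. Verifying that every genuinely occurring component is degenerate — rather than merely low-degree — is where the care is needed, since the bound $D\cdot H \le g-1$ must be checked for each admissible class.
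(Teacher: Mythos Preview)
Your overall strategy --- mirror the $g=7$ argument using Proposition \ref{casosei} --- is exactly what the paper does (its proof is literally one sentence: ``similar to Proposition \ref{ilcasettosette} using Proposition \ref{casosei}''). However, your case analysis of the reducible decompositions contains a genuine error.

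You write that once $B$ and $R$ are excluded as components, ``the only surviving decomposition is therefore into two divisors each $\sim B+\tfrac{5}{2}R$ --- but this is not integral,'' and then pass to a bisecant-plus-fibral splitting. This is wrong: on $\mathbb{F}_2$ the linear system $|B+mR|$ has irreducible members for every $m\ge 2$, so the splitting
\[
\Gamma \;=\; D_1 + D_2, \qquad D_1\in |A|=|B+2R|,\quad D_2\in |H|=|B+3R|
\]
is perfectly legitimate and is in fact the direct analogue of the $g=7$ decomposition $\Gamma=D_1+D_2$ with $D_i\in|A|$. You have not ruled it out; you have overlooked it.

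Fortunately your final degeneracy check, once corrected, disposes of this case as well: $D_1\cdot H=(B+2R)(B+3R)=3$ and $D_2\cdot H=H^2=g-2=4$, so each $D_i$ is a rational curve of degree $\le 4$ in $\bP^{5}$ and hence degenerate. Note here that the correct criterion is $D\cdot H\le g-2$, not $D\cdot H\le g-1$ as you wrote: a rational normal curve of degree $g-1$ \emph{is} nondegenerate in $\bP^{g-1}$.

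One further subtlety you glossed over: for $g=6$ we have $\Gamma_{|C}=p+R_V$ with $\{p\}=B\cap C$, not just $R_V$. So the argument excluding a fibre component $R$ must show $R_{|C}\not\le p+R_V$, which is slightly more than ``$R_{|C}\not\le R_V$''. This still holds (compare multiplicities at any ramification point, and at $p$ the fibre contributes three points of $C$ of which at most one is $p$), but it deserves a word.
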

\begin{proof} The proof is similar to the one of Proposition \ref{ilcasettosette} by using Proposition \ref{casosei}.
\end{proof}

\subsection{The proof of the main Theorem \ref{main}}
By Proposition \ref{quasifatto} we have to consider only the cases where $k=1$, $g=6,7$. We consider the diagram (\ref{mapsofjacobian}). Note that $Z$ is a nondegenerate curve since it is obtained by projection of the canonical image of $D$ which is a rational normal curve since $D$ is hyperelliptic. On the other hand we also have $Z\hookrightarrow \Gamma^1$. By Proposition \ref{ilcasettosette} and by Proposition \ref{ilcasettosei} we see that such a non degenerate curve $Z$ cannot exist.

\end{document}